\newtheorem{notation}{Notations}[section]
\newtheorem{remarque}[notation]{Remark}
\newtheorem{thm}[notation]{Theorem}
\newtheorem{cor}[notation]{Corollary}
\newtheorem{prop}[notation]{Proposition}
\newtheorem{lem}[notation]{Lemma}
\newcommand{\pE}{\partial^* E}
\newcommand{\hausmoins}{\mathcal{H}^{d-1}}
\newcommand{\ginf}{\|g\|_\infty}
\newcommand{\dive}{\operatorname{div}}
\newcommand{\eps}{\varepsilon}
\newcommand{\R}{\mathbb R}
\newcommand{\C}{\mathcal C}
\newcommand{\Ev}{\widetilde E_v}
\newcommand{\E}{\widetilde E}
\newcommand{\bbar}{\big{|}}
\newcommand{\Z}{\mathbb Z}
\begin{document}

\title{\Large Volume-constrained minimizers for the prescribed curvature problem in periodic media}

\author{
        M. Goldman
        \footnote{CMAP, CNRS UMR 7641, Ecole Polytechnique,
        91128 Palaiseau, France, email: goldman@cmap.polytechnique.fr}
        \and M. Novaga
        \footnote{Dipartimento di Matematica, Universit\`a di Padova,
        via Trieste 63, 35121 Padova, Italy, email: novaga@math.unipd.it}
}
\date{}
\maketitle

\begin{abstract}
\noindent 
We establish existence of compact minimizers of the prescribed mean curvature problem with volume constraint in periodic media. 
As a consequence, we construct compact approximate solutions to the prescribed mean curvature equation. 
We also show convergence after rescaling of the volume-constrained minimizers 
towards a suitable Wulff Shape, when the volume tends to infinity.   
\end{abstract}

\section{Introduction}
In recent years, a lot of attention has been drawn towards the problem of constructing surfaces with prescribed mean curvature.
More precisely, given an assigned function $g:\R^d\to \R$, the problem is finding a hypersurface having mean curvature $\kappa$ satisfying
\begin{equation}\label{kappagi}
\kappa=g.
\end{equation}
To our knowledge, this problem was first posed by S.T. Yau in \cite{yau},
under the additional constraint of the hypersurface being diffeomorphic to a sphere,
and a solution was provided in \cite{tw,huang} when the function 
$g$ satisfies suitable decay conditions at infinity, namely that it decays faster than the mean curvature 
of concentric spheres. Another approach was presented in \cite{bethuel,guida}, by means of  
conformal parametrizations and a clever use of the mountain pass lemma. 
A serious limitation of this method is the impossibility to extend it to dimension higher than three, 
due to the lack of a good equivalent of a conformal parametrization.

Motivated by some homogenization problems in front propagation \cite{noval},
in this paper we look for solutions to \eqref{kappagi} without any topological constraint but with a periodic function $g$,
so that in particular, it does not decay to zero at infinity.
A natural idea is to look for critical points of the prescribed curvature functional
\[
F(E)= P(E)-\int_E g\,dx,
\]
as it is well-known that such critical points solve (\ref{kappagi}), whenever they are smooth \cite{giusti}.
Observe that, in general, it is not possible to construct solutions of (\ref{kappagi}) by a direct minimization of the functional $F$, because such minimizers may not exist or be empty.  

The first result in this setting was obtained by Caffarelli and de la Llave in \cite{cdll} (see also \cite{chambthour}) 
where the authors construct planelike solutions of (\ref{kappagi}) under the assumption that $g$ is small and has zero average, 
by minimizing $F$ among sets with boundary contained in a given strip, 
and then show that the constraint does not affect the curvature of the solution.

Here we are interested instead in compact solutions of (\ref{kappagi}). 
This problem seems difficult in this generality and only some preliminary results,
in the two-dimensional case, 
are presently available \cite{kirsch}. 
However, the following perturbative result has been proved in \cite{noval}: 
given a periodic function $g$ with zero average and small $L^\infty$-norm  
and $\eps$ arbitrarily small, there exists a compact solution of 
\[
\kappa=g_\eps
\]
where $\|g_\eps-g\|_{L^1}\le \eps$. Since the $L^1$-norm does not seem very well suited for this problem, 
a natural question raised in \cite{noval} 
was whether the same result holds when the $L^1$-norm is replaced by the $L^\infty$-norm.

In this paper we answer this question. More precisely, we prove the following result (see Theorem \ref{thmain}):
let $g$ be a periodic H\"older continuous function with zero average on the unit cell $Q=[0,1]^d$ and such that
\begin{equation}\label{condgigi}
\int_E g\,dx\le (1-\Lambda)P(E,Q) \qquad \forall E\subset Q
\end{equation}
for some $\Lambda>0$, where $P(E,Q)$ is the relative perimeter of $E$ in $Q$.
Then for every $\eps>0$ there exist $0< \eps'<\eps$ and a compact solution of
\begin{equation}\label{eqepsp}
\kappa=g+\eps'.
\end{equation}
We observe that \eqref{condgigi} is the same assumption made in \cite{chambthour} in order to prove existence of planelike minimizers. This condition is for instance verified if $||g||_{L^d(Q)}$ is smaller than the isoperimetric constant of $Q$, and allows $g$ to take large negative values.

We construct approximate solutions of \eqref{eqepsp} as volume constrained minimizers of $F$ for big volumes. 
This motivates the study of the isovolumetric function $f:[0,+\infty)\to \R$ defined as
\begin{equation}\label{con}
 f(v)=\min_{|E|=v} F(E).
\end{equation}
As a by-product of our analysis, we are able to characterize the asymptotic shape of minimizers as 
the volume tends to infinity, showing that they converge after appropriate rescaling to the Wulff Shape 
(i.e. the solution of the isoperimetric problem) relative to an anisotropy $\phi_g$ depending on $g$.
We mention that, in the small volume regime, the contribution of $g$ becomes irrelevant and the minimizers 
converge to standard spheres (see \cite{figmag} and references therein).

The plan of the paper is the following:
in Section \ref{secex} we show existence of compact minimizers of \eqref{con}. 
In Section \ref{secprop} we prove that the function $f$ is locally Lipschitz continuous 
and link its derivative to the curvature of the minimizers. 
We also provide an example of a function $f$ which is not differentiable everywhere. 
Let us notice that in these first two parts no assumption is made on the average of $g$ or on its size. 
In Section \ref{secmean} we use the isovolumetric function to find solutions of \eqref{eqepsp}. 
Eventually, in Section \ref{secbig} 
we investigate the behavior of the constrained minimizers of (\ref{con}) as the volume goes to infinity.

\paragraph{Notation and general assumptions.} 
We shall assume that $g$ is a $\C^{0,\alpha}$ periodic function,
with periodicity cell $Q=[0,1]^d$.
We shall also suppose that the dimension of the ambient space is smaller or equal to $7$, 
so that quasi-minimizers of the perimeter have boundary of class $\C^{2,\alpha}$ \cite{giusti}. 
We believe that this restriction is not relevant for the results of this work, but we were not able to remove it. 
For a set of finite perimeter we denote by $P(E)$ its perimeter and by $\pE$ its reduced boundary (see \cite{giusti} for precise definitions). 
Given an open set $\Omega$, we denote by $P(E,\Omega)$ the relative perimeter of $E$ in $\Omega$. 
We take as a convention that the mean curvature (which we define as the sum of all principal curvatures)
of a convex set is positive. If $\nu$ is the outward normal to a set with smooth boundary, 
this amounts to say that the mean curvature $\kappa$ is equal to $\dive(\nu)$.

\paragraph{Acknowledgements.} 
The authors would like to thank Antonin Chambolle for very helpful comments and suggestions. They thank Luca Mugnai for pointing out a mistake in a previous version of  \eqref{alexfench} and G.~Psaradakis for drawing \cite{greco} to their attention.
The first author wish to thank Gilles Thouroude for several discussions on this problem.

\section{Existence of minimizers}\label{secex}
In this section we prove existence of compact volume-constrained minimizers of $F$, 
by showing that for every volume $v$, the problem is equivalent to the unconstrained problem
\begin{equation}\label{uncon}
 \min_{E\subset\R^d} \, F_\mu(E)=\min_{E\subset\R^d} \, P(E)-\int_E g\,dx+\mu \bbar |E|-v \bbar,
\end{equation}
for $\mu>0$ large enough.
We start by studying (\ref{uncon}), showing existence of smooth compact minimizers. 
We then show that there exists $\mu_0$ such that, for $\mu\geq \mu_0$, every compact minimizer of $F_\mu$ has volume $v$. 
In particular, 
this will provide existence of minimizers of (\ref{con}), since $\displaystyle f(v) \leq \min_E \, F_\mu(E)$ for every $\mu\ge 0$.

Denoting by $Q_R$ the cube $[-R/2,R/2]^d$ of sidelength $R$, we consider the spatially constrained problem
\begin{equation}\label{scon}
 \min_{E \subset Q_R} F_\mu(E).
\end{equation}
Having restrained our problem to a bounded domain, we gain compactness of minimizing sequences 
and thus existence of minimizers for (\ref{scon}) by the direct method \cite{giusti}. 
We want to show that these minimizers do not depend on $R$ for $R$ big enough. In order to do so, we need density estimates as \cite{cdll}.

\begin{prop}\label{densite}
There exist two constants $C(d)$ and $\gamma$ depending only on the dimension $d$ such that, if we set $\displaystyle r_0(\mu)=\frac{C(d)}{\mu+\|g\|_\infty}$, then  for every minimizer $E$ of (\ref{scon}) and every $x\in \mathbb{R}^d$,
\begin{itemize}
 \item $|E\cap B_r(x)| \geq \gamma r^d$ for every $r\leq r_0$ if $|B_r(x)\cap E|>0$ for any $r>0$,
\item  $| B_r(x)\backslash E| \geq \gamma r^d$ for every $r\leq r_0$ if $|B_r(x)\backslash E|>0$ for any $r>0$.
\end{itemize}
 \end{prop}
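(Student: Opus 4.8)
The plan is to treat a minimizer $E$ of (\ref{scon}) as a volume-almost-minimizer of the perimeter and run the classical comparison argument, in which the volume term $\mu\bbar|E|-v\bbar$ and the forcing term $-\int_E g$ both behave, under localized modifications, like lower-order volume errors. I will detail the first (inner) estimate; the second is obtained by exchanging the roles of $E$ and its complement. Fix $x\in\R^d$ with $|B_r(x)\cap E|>0$ for all $r>0$, and set $m(r):=|E\cap B_r(x)|$. By the coarea formula $m$ is nondecreasing and absolutely continuous, with $m'(r)=\hausmoins(E\cap\partial B_r(x))$ for a.e. $r$. As a competitor I take $E\setminus B_r(x)$, which is still contained in $Q_R$ and hence admissible.

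The first step is to compare $F_\mu(E)\le F_\mu(E\setminus B_r(x))$ and bound each term. For a.e. $r$ the slicing formulas for sets of finite perimeter give
\[
P(E\setminus B_r(x)) \le P\big(E;\R^d\setminus\overline{B_r(x)}\big) + \hausmoins\big(E^{(1)}\cap\partial B_r(x)\big), \qquad P(E)=P\big(E;\R^d\setminus\overline{B_r(x)}\big)+P(E;B_r(x)),
\]
where $E^{(1)}$ denotes the set of points of density one. Since $|E\setminus B_r(x)|=|E|-m(r)$, the volume term increases by at most $\mu\,m(r)$, while $-\int_{E\setminus B_r(x)}g$ exceeds $-\int_E g$ by at most $\ginf\,m(r)$. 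Feeding these into the minimality inequality and cancelling the common term $P(E;\R^d\setminus\overline{B_r(x)})$ leaves
\[
P(E;B_r(x)) \le \hausmoins\big(E^{(1)}\cap\partial B_r(x)\big) + (\mu+\ginf)\,m(r).
\]

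Now I add $\hausmoins(E^{(1)}\cap\partial B_r(x))=m'(r)$ to both sides, recognize the left-hand side plus this term as $P(E\cap B_r(x))$, and invoke the isoperimetric inequality $P(E\cap B_r(x))\ge c_d\,m(r)^{(d-1)/d}$ to get, for a.e. $r$,
\[
c_d\,m(r)^{(d-1)/d}\le 2\,m'(r) + (\mu+\ginf)\,m(r).
\]
Using $m(r)\le\omega_d r^d$ (with $\omega_d$ the volume of the unit ball) I bound the last term by $(\mu+\ginf)\,\omega_d^{1/d}\,r\,m(r)^{(d-1)/d}$, so choosing $C(d)$ small enough that $(\mu+\ginf)\,\omega_d^{1/d}\,r\le c_d/2$ for all $r\le r_0=C(d)/(\mu+\ginf)$ absorbs it into the left-hand side, yielding $m'(r)\ge \tfrac{c_d}{4}\,m(r)^{(d-1)/d}$. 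Rewriting this as $\tfrac{d}{dr}\,m(r)^{1/d}\ge \tfrac{c_d}{4d}$ and integrating (using $m(0^+)=0$, the hypothesis $m(r)>0$ ensuring we never divide by zero) gives $m(r)\ge\gamma\,r^d$ with $\gamma=(c_d/4d)^d$, which is the first claim. The outer estimate is identical after replacing the competitor by $E\cup B_r(x)$: the forcing and volume errors are controlled in the same way, and the isoperimetric inequality is applied instead to $B_r(x)\setminus E$.

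I expect the crux to be the calibration producing $r_0$: the comparison only yields a useful inequality as long as the linear-in-$m$ error from $\mu$ and $g$ is dominated by the sublinear isoperimetric term $m^{(d-1)/d}$, which forces the admissible scale to be inversely proportional to $\mu+\ginf$, precisely the stated $r_0$. A minor technical point is admissibility of the competitor near $\partial Q_R$: for the outer estimate one should take $E\cup(B_r(x)\cap Q_R)$ there, but since the estimates will ultimately be used only at interior points this causes no real difficulty.
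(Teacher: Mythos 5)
Your proof is correct and follows essentially the same route as the paper's: comparison with the competitor $E\setminus B_r(x)$ (resp.\ $E\cup B_r(x)$), cancellation of the perimeter outside the ball, the differential inequality $c_d\,m^{(d-1)/d}\le 2m'+(\mu+\ginf)m$, absorption of the linear term for $r\le r_0\sim(\mu+\ginf)^{-1}$, and integration of $\frac{d}{dr}m^{1/d}\ge c_d/(4d)$. If anything, you are slightly more careful than the paper on two points it leaves implicit: the measure-theoretic identities involving $E^{(1)}$ and the reduced boundary, and the admissibility of the outer competitor near $\partial Q_R$.
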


\begin{proof}
 Let $x \in \pE$  then by minimality of $E$ we have
\[P(E)-\int_E g\,dx +\mu \bbar |E| - v \bbar \leq P(E \backslash B_r(x))
-\int_{E\backslash B_r(x)} g \,dx+\mu \bbar |E\backslash B_r(x)| - v\bbar,
\]
hence
\begin{align*}
 P(E) &\leq \int_{E\cap B_r} g\,dx +P(E\backslash B_r)+\mu \bbar |E|-|E\backslash B_r| \bbar\\
	&=  \int_{E\cap B_r} g \,dx+P(E\backslash B_r)+\mu | E \cap B_r |\\
	&\leq |E\cap B_r| (\ginf +\mu)+ P(E\backslash B_r).
\end{align*}

\noindent On the other hand we have 
\[P(E)= \hausmoins(\pE \cap B_r)+\hausmoins(\pE \cap B_r^c)\]
and

\[P(E\backslash B_r)= \hausmoins(E \cap \partial B_r)+\hausmoins(\pE \cap B_r^c).\]

\noindent From these inequalities we get
\[
 \hausmoins(\pE \cap B_r) \leq \hausmoins(E \cap \partial B_r)+(\ginf+\mu) |E \cap B_r|.
\]
Letting $U(r)=|E\cap B_r|$ and using the isoperimetric inequality \cite{giusti}, we have
\begin{align*}
 c(d)U(r)^{\frac{d-1}{d}} & \leq P(E\cap B_r) \\
			&=\hausmoins(\pE \cap B_r)+\hausmoins(\partial B_r \cap E)\\
			& \leq 2 \hausmoins(\partial B_r \cap E)+(\ginf+\mu) U(r).
\end{align*}
 Recalling  that $\hausmoins(\partial B_r \cap E)=U'(r)$ for a.e. $r>0$, we find

\begin{equation}\label{gron}
 c(d)U(r)^{\frac{d-1}{d}}\leq 2 U'(r)+(\ginf+\mu) U(r).
\end{equation}

\noindent The idea is that, when $U$ is small, 
the term $U^{\frac{d-1}{d}}$ dominates the term which is linear in $U$ 
so that we can get rid of it. Letting $\omega_d$ be the volume 
of the unit ball and $\displaystyle r \leq \omega_d^{-\frac{1}{d}}
\left(\frac{c(d)}{2(\mu+\ginf)} \right) $, we then have 
\[
U(r)\leq |B_r|=\omega_d r^d\leq \left(\frac{c(d)}{2(\mu+\ginf)} \right)^{d}.
\]
Raising each side of the inequality to the power $-\frac{1}{d}$ and multiplying by $U$ we get
\[
U(r)^\frac{d-1}{d} \geq \frac{2(\mu+\ginf)}{c(d)} U
\]
and from this 
\[\frac{c(d)}{2} U(r)^\frac{d-1}{d}-(\mu+\ginf) U \geq 0\]
thus finally
\[
c(d) U(r)^\frac{d-1}{d}-(\mu+\ginf) U \geq \frac{c(d)}{2} U(r)^\frac{d-1}{d}.
\]
Putting this back in (\ref{gron}) and letting
$\displaystyle C(d)= c(d)\omega_d^{-\frac{1}{d}}/2$ 
we have
\[
\frac{c(d)}{4} U(r)^\frac{d-1}{d} \leq U'(r)  \qquad \forall r \leq \frac{C(d)}{(\mu+\|g\|_\infty)}. 
\]
If we set $V(r)= U^\frac{1}{d}(r)$ we have 
\[
V'(r)=\frac{1}{d} U'(r) U^{\frac{1-d}{d}}(r)\geq \frac{c(d)}{4d}.
\]

\noindent Integrating we get
\[V(r) \geq \frac{c(d)}{4d} r \qquad \textrm{ hence } \qquad U(r) \geq  \left(\frac{c(d)}{4d}\right)^{d} r^d. \]

The second inequality is obtained by repeating the argument with $E\cup B_r(x)$ instead of $E\backslash B_r(x)$.
\end{proof}

We now estimate the error made by relaxing the constraint on the volume.

\begin{lem}\label{vol}
 For every set of finite perimeter $E$ and every $\mu > \ginf$ we have
\[ \bbar |E|- v\bbar \leq \frac{F_\mu(E)+v\ginf}{\mu-\ginf}.\]
\end{lem}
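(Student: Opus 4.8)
The plan is to read off the estimate directly from the definition of $F_\mu$ via a short chain of elementary bounds; the only point requiring a little care is to feed the triangle inequality back into the computation so as to reabsorb an unwanted occurrence of $|E|$. First I would rewrite the defining identity in the form
\[
\mu\bbar |E|-v\bbar=F_\mu(E)-P(E)+\int_E g\,dx,
\]
isolating on the left the quantity we wish to control.

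Next I would discard the perimeter using $P(E)\ge 0$ and bound the potential term by $\int_E g\,dx\le \ginf\,|E|$, which yields
\[
\mu\bbar |E|-v\bbar\le F_\mu(E)+\ginf\,|E|.
\]
The key step is then to express the remaining $|E|$ through the very quantity being estimated: by the triangle inequality $|E|\le \bbar |E|-v\bbar+v$, so that
\[
\mu\bbar |E|-v\bbar\le F_\mu(E)+\ginf\big(\bbar |E|-v\bbar+v\big).
\]
Collecting the terms containing $\bbar |E|-v\bbar$ on the left gives
\[
(\mu-\ginf)\bbar |E|-v\bbar\le F_\mu(E)+v\,\ginf,
\]
and dividing by $\mu-\ginf$ produces the claimed inequality.

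I do not expect any genuine obstacle here, since the result is an elementary rearrangement; the two things worth flagging are that the occurrence of $|E|$ with the ``wrong sign'' must be reabsorbed via the triangle inequality rather than bounded crudely (this is what makes the coefficient of $\bbar |E|-v\bbar$ come out as exactly $\mu-\ginf$), and that the hypothesis $\mu>\ginf$ is precisely what guarantees $\mu-\ginf>0$, so that the final division preserves the direction of the inequality.
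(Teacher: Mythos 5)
Your proof is correct and follows essentially the same route as the paper's: both rest on discarding $P(E)\ge 0$, bounding $\int_E g\,dx\le \ginf\,|E|$, and reabsorbing the leftover $\ginf |E|$ term into the left-hand side before dividing by $\mu-\ginf>0$. The only difference is cosmetic: the paper splits into the cases $|E|>v$ and $|E|\le v$ (getting the slightly sharper denominator $\mu+\ginf$ in the second case before relaxing it), whereas you treat both at once via the triangle inequality $|E|\le \bbar |E|-v\bbar+v$.
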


\begin{proof}
 If $|E| >v$ we have 
\[F_\mu(E)=P(E)-\int_E g+\mu ( |E|-v )\]
thus
\[\mu ( |E|-v )\leq F_\mu(E)+\ginf |E|\]
and from this we find
\[
(\mu-\ginf)(|E|-v)\leq F_\mu(E)+v\ginf.
\]
Dividing by $\mu-\ginf$ we get 
\[
\bbar |E|- v\bbar \leq \frac{F_\mu(E)+v\ginf}{\mu-\ginf}.
\]
If $|E|\le v$ we similarly get
\[
(\mu+\ginf)(|E|-v)\leq F_\mu(E)+v \ginf
\]
hence
\[
\bbar |E|- v \bbar \leq \frac{F_\mu(E)+v\ginf}{\mu+\ginf} \leq \frac{F_\mu(E)+v\ginf}{\mu-\ginf}.
\]
\end{proof}

We now prove that the minimizers do not depend on $R$, for $R$ big enough. Here the periodicity of $g$ is crucial.
\begin{prop}\label{R0}
For every $\mu >\ginf$, there exists $R_0(\mu)$ such that for every $R\geq R_0$, there exists a minimizer  $E_R$ of \eqref{scon} 
verifying $\textrm{diam}(E_R)\leq R_0 $. Equivalently we have 
\[
\min_{E \subset Q_R} F_\mu(E)=\min_{E \subset Q_{R_0}} F_\mu(E)
\]
for all $R\geq R_0$.
\end{prop}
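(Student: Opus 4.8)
The plan is to produce, for each large $R$, a minimizer of \eqref{scon} whose diameter is bounded \emph{independently of} $R$, by combining the volume bound of Lemma \ref{vol} with the density estimates of Proposition \ref{densite}, and then using the periodicity of $g$ to ``fold'' any minimizer that is too spread out. First I would fix a uniform a priori bound on the volume. Since a fixed ball $B$ of volume $v$ is an admissible competitor as soon as $R$ is large enough to contain it, and $F_\mu(B)=P(B)-\int_B g\le P(B)+v\ginf=:C_0$ independently of $R$, every minimizer $E=E_R$ satisfies $F_\mu(E)\le C_0$. Lemma \ref{vol} then yields
\[
\bbar |E|-v\bbar\le\frac{C_0+v\ginf}{\mu-\ginf},
\]
so $|E|\le M$ for a constant $M=M(v,\mu,\ginf)$ independent of $R$.

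Next I would convert the density estimates into a bound on the extent of $E$ in each coordinate direction, up to the presence of a gap. Set $\bar r=\min(r_0(\mu),\tfrac12)$, with $r_0(\mu)$ as in Proposition \ref{densite}. I claim that if the projection of $E$ onto the $x_1$-axis has length greater than $L_0:=CM/(\gamma\bar r^{\,d})$ (for a suitable dimensional constant $C$), then there is a slab $\{a<x_1<a+1\}$ of width one meeting $E$ in a null set. Indeed, if no such slab existed, then every unit slab $\{k<x_1<k+1\}$ would satisfy $|E\cap\{k<x_1<k+1\}|>0$ and hence contain a Lebesgue density point of $E$; picking one such point in every other unit slab produces $\gtrsim(\text{length})/2$ points whose $x_1$-coordinates are at least $1\ge 2\bar r$ apart, so the balls $B_{\bar r}$ centered at them are disjoint. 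By the first estimate of Proposition \ref{densite} each such ball carries volume $\ge\gamma\bar r^{\,d}$, and summing contradicts $|E|\le M$ once the length exceeds $L_0$. The same statement holds in every coordinate direction.

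Then comes the step where periodicity is essential. Whenever a gap $\{a<x_1<b\}$ with $b-a\ge1$ is present, I would split $E=E^-\sqcup E^+$ along it, with $E^-\subset\{x_1\le a\}$ and $E^+\subset\{x_1\ge b\}$, and translate the right piece by $-k e_1$ with $k=\lfloor b-a\rfloor\ge 1$, forming $E'=E^-\cup(E^+-ke_1)$. Since $ke_1\in\Z^d$ and $g$ is $Q$-periodic, $\int_{E'}g=\int_E g$; moreover $|E'|=|E|$; and because $E^-$ and $E^+$ lie at positive distance one has $P(E)=P(E^-)+P(E^+)$, while $b-k\ge a$ forces $E^+-ke_1\subset\{x_1\ge a\}$ so that $E'$ is again a disjoint union and subadditivity gives $P(E')\le P(E^-)+P(E^+)=P(E)$. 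Hence $F_\mu(E')\le F_\mu(E)$, and since $E'\subset Q_R$ it is again a minimizer of \eqref{scon}, with strictly smaller $x_1$-extent. Iterating this over all coordinate directions—each step reduces some extent by at least one, so the procedure terminates after finitely many steps—produces a minimizer $E_R$ all of whose coordinate extents are $\le L_0$, hence $\text{diam}(E_R)\le\sqrt d\,L_0$. Choosing $R_0$ slightly larger than $\sqrt d\,L_0$, with enough room to reposition $E_R$ inside $Q_{R_0}$ by an integer translation (which preserves $F_\mu$), gives a minimizer of diameter $\le R_0$; since $R\mapsto\min_{Q_R}F_\mu$ is nonincreasing and this repositioned set is admissible in $Q_{R_0}$, the two minima coincide for every $R\ge R_0$.

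The main obstacle I anticipate is the cut-and-translate step: one must verify that splitting along the gap really gives $P(E)=P(E^-)+P(E^+)$ (which uses the positive distance between the two pieces) and that reassembling cannot increase the perimeter, and one must confirm that the gap-existence threshold $L_0$, and hence $R_0$, is genuinely independent of $R$—this is exactly what the uniform volume bound $M$ from the first step guarantees.
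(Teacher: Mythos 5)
Your proof is correct and follows essentially the same strategy as the paper: a uniform volume bound from Lemma \ref{vol} with a ball as competitor, the density estimates of Proposition \ref{densite} to bound the number of occupied unit slabs (the paper counts unit cubes, which is equivalent), and a periodicity-preserving cut-and-translate across an empty slab to fold the minimizer into a region of bounded diameter. The only differences are cosmetic (translating by $\lfloor b-a\rfloor$ at once rather than by one unit per iteration), so there is nothing to add.
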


\begin{proof}
Let $E_R$ be a minimizer of \eqref{scon}. Let $Q$ be the unit square and 
\[
N=\sharp \{ z \in \mathbb{Z}^d \; / \;   |\{z+Q\} \cap E_R| \neq 0 \}.
\] 
We want to bound $N$ from above by a constant independent of $R$.

Let $r_0=\frac{C(d)}{\mu+\|g\|_\infty}$ as in Proposition \ref{densite}. 
For all $x \in E_R$ we have
\[|E_R\cap B_r(x)| \geq \gamma r^d \qquad \forall r \leq r_0.\]

\noindent Letting $r_1=\min(r_0 , \frac{1}{2})$, for all $x\in\R^d$ we have 
\[
\sharp \{ z \in \mathbb{Z}^d \; / \;   \{z+Q\} \cap B_{r_1}(x) \neq \emptyset \}\leq 2^d.  
\]
Therefore, we can find at least $N/2^d$ points $x_i$ in $E_R$ such that $B_{r_1}(x_i)\cap B_{r_1}(x_j) 
=\emptyset$ for every $i\neq j$ and such that $x_i \in Q+z_i$ with $|\{z_i+Q\} \cap E_R| \neq 0$ for some $z_i \in \Z$.
 
\noindent We thus have 
\[|E_R| \geq  \sum_i |B_{r_1}(x_i)\cap E_R| \geq \frac{N}{2^d} \gamma r_1^d.\]
This gives us 
\[N\leq \frac{2^d|E_R|}{\gamma r_1^d}. \]

\noindent Letting $B^v$ be a ball of volume $v$, by Lemma \ref{vol} and $F_\mu(E_R)\le F_\mu(B^v)$, we have 
\begin{align*} 
\bbar |E_R|-v \bbar 
&\leq \frac{F_\mu(B^v)+v\ginf}{\mu-\ginf}\\
& \leq \frac{c(d) v^{\frac{d-1}{d}}+2v\ginf}{\mu-\ginf}.
\end{align*}
This shows that 
\[
|E_R| \leq v+\frac{c(d) v^{\frac{d-1}{d}}+2v\ginf}{\mu-\ginf}
\]
so that $N$ is bounded by a constant independent of $R$. 

We now prove that $\textrm{diam}(E_R)\leq C(d) N$. Indeed let $x \in E_R$ and let 
$P_0= [0,1]\times [-R/2,R/2]^{d-1}$ be a slice of $Q_R$ orthogonal to the direction $e_1$. 
For $i \in \mathbb{Z}$ we also set $P_i=P_0+i e_1$. 
Our aim is showing that $E_R$ is contained in a box of size $N$ in the direction $e_1$. 
Up to translation we can suppose that $E_R \cap P_i=\emptyset$ for all $i<0$. 
We want to show that we can choose $\displaystyle E_R \subset \bigcup_{0\leq i \leq N} P_i$.

\begin{figure}[ht]
\centering{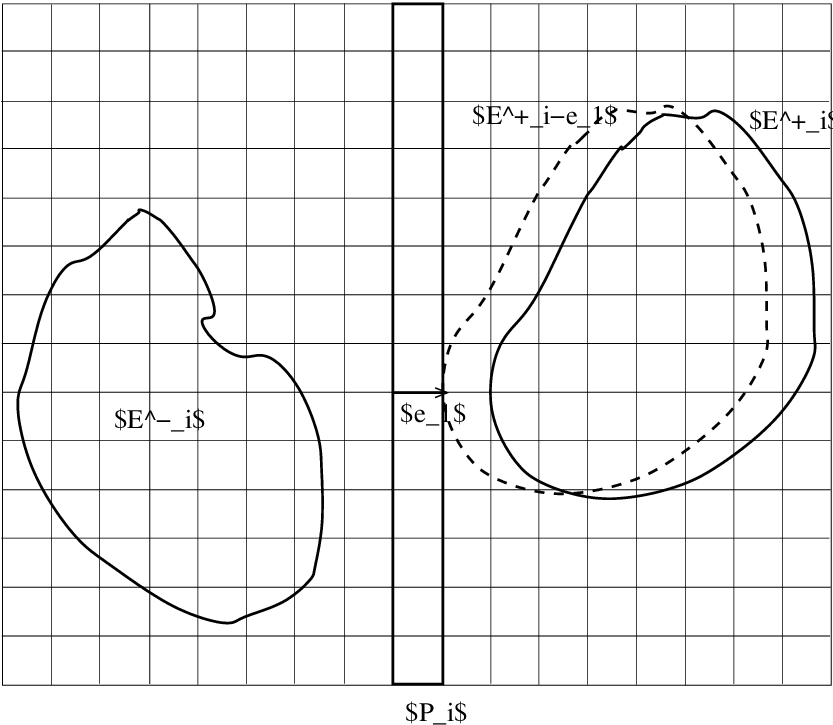}
\caption{the construction in the proof of Proposition \ref{R0}.}
\label{carre}
\end{figure}

\noindent Let $I\leq R $ be the least integer such that $\displaystyle E_R \subset \bigcup_{0\leq i \leq I} P_i$ and suppose $I\geq N$. Because of the definition of $N$, there is at most $N$ slices $P_i$ such that $P_i \cap E_R \neq \emptyset$. Hence there exists $i$ between $0$ and $N$ such that $P_i \cap E_R =\emptyset$. Let $E_i^+=\displaystyle \bigcup_{j>i} E_R \cap P_j$ and $\displaystyle E_i^-=\bigcup_{j<i} E_R \cap P_j$ then if we set $\widetilde{E}_R=E_i^- \cup \{E_i^+-e_1\}$ we have $F_\mu(\widetilde{E}_R)=F_\mu(E_R)$ and $\widetilde{E}_R \subset \bigcup_{0\leq i \leq I-1} P_i$ giving the claim by iterating the procedure (see Figure \ref{carre}).

The same argument applies to any orthonormal direction $e_k$, hence $E_R \subset Q_{2N}$.
\end{proof}

We now prove existence of  minimizers for $F_\mu$.

\begin{prop}\label{proper}
For $\mu >\ginf$, there exists a bounded minimizer of $F_\mu$. 
Moreover such minimizer has boundary of class $\mathcal{C}^{2,\alpha}$,
where $\alpha$ is the H\"older exponent of the function $g$.
\end{prop}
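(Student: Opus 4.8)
The plan is to produce a global minimizer of $F_\mu$ out of the spatially constrained minimizers of \eqref{scon}, and then to read off its regularity from the theory of quasi-minimizers of the perimeter. For existence, I would start from a minimizer $E_{R_0}$ of \eqref{scon} with $R=R_0$: by Proposition \ref{R0} such a minimizer exists and satisfies $\textrm{diam}(E_{R_0})\le R_0$, so it is bounded. To see that $E_{R_0}$ minimizes $F_\mu$ among \emph{all} sets of finite perimeter, take an arbitrary competitor $E$ with $F_\mu(E)<+\infty$; Lemma \ref{vol} forces $|E|<+\infty$, and of course $P(E)<+\infty$. Truncating, set $E^R:=E\cap Q_R$. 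Since $|E|$ and $P(E)$ are finite, one can choose a sequence $R_k\to+\infty$ along which $\partial Q_{R_k}$ carries vanishing perimeter, i.e. $\hausmoins(\partial^*(E\cap Q_{R_k})\cap\partial Q_{R_k})\to 0$; then $P(E^{R_k})\to P(E)$ while $\int_{E^{R_k}}g\to\int_E g$ and $|E^{R_k}|\to|E|$ by dominated convergence, so $F_\mu(E^{R_k})\to F_\mu(E)$. As $E^{R_k}\subset Q_{R_k}$ and, by Proposition \ref{R0}, $\min_{F\subset Q_{R_k}}F_\mu(F)=F_\mu(E_{R_0})$ for $R_k\ge R_0$, passing to the limit gives $F_\mu(E_{R_0})\le F_\mu(E)$. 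Hence $E:=E_{R_0}$ is a bounded minimizer of $F_\mu$.

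Next I would show that $E$ is a quasi-minimizer of the perimeter. Given any $F$ with $E\triangle F\subset B_r(x)$, the inequality $F_\mu(E)\le F_\mu(F)$ combined with the elementary bounds
\[
\int_E g\,dx-\int_F g\,dx\le \ginf\,|E\triangle F|,\qquad \bbar|F|-v\bbar-\bbar|E|-v\bbar\le |E\triangle F|
\]
yields $P(E)\le P(F)+(\ginf+\mu)\,|E\triangle F|$, and therefore, localizing in $B_r(x)$ and using $|E\triangle F|\le\omega_d r^d$,
\[
P(E,B_r(x))\le P(F,B_r(x))+(\ginf+\mu)\,\omega_d\, r^d .
\]
This is exactly the quasi-minimality condition, and it is the precise analytic counterpart of the density estimates of Proposition \ref{densite}.

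For the regularity, I would invoke the theory recalled in the general assumptions: since $d\le 7$, quasi-minimizers of the perimeter have empty singular set, so $\pE$ is an embedded hypersurface of class $\C^{1,\beta}$ and $\partial E\in\C^{1,\beta}$. To upgrade to $\C^{2,\alpha}$ I would write the Euler--Lagrange equation for $F_\mu$, distinguishing the behaviour of the volume term. If $|E|>v$ the penalization is smooth near $E$ and the first variation gives $\kappa=g-\mu$ on $\partial E$; if $|E|<v$ it gives $\kappa=g+\mu$; and if $|E|=v$, testing against volume-preserving normal variations gives $\kappa=g+\lambda$ for a constant Lagrange multiplier $\lambda$. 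In every case the mean curvature of $\partial E$ equals $g$ plus a constant. Since $g\in\C^{0,\alpha}$, writing $\partial E$ locally as a graph turns this into a quasilinear elliptic equation with $\C^{0,\alpha}$ right-hand side, and Schauder estimates give $\partial E\in\C^{2,\alpha}$.

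The main obstacle is, I expect, not the existence step—which is a clean assembly of Propositions \ref{R0} and \ref{densite} and Lemma \ref{vol}—but the two delicate points in the regularity. First, the truncation in the existence argument must be performed along a sequence of scales on which no perimeter concentrates on $\partial Q_{R_k}$; this is possible precisely because $|E|$ and $P(E)$ are finite, but it is what makes the global comparison rigorous. Second, the penalization $\mu\bbar|E|-v\bbar$ is not differentiable when $|E|=v$, so the curvature equation cannot be obtained by a naive first variation and must instead be derived through volume-preserving variations, producing a \emph{constant} Lagrange multiplier rather than a prescribed constant; fortunately only the $\C^{0,\alpha}$ regularity of the right-hand side matters for the Schauder bootstrap. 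The heavy machinery, namely the $\C^{1,\beta}$ regularity of quasi-minimizers in dimension $d\le 7$, is imported from \cite{giusti}.
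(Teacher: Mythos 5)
Your proof is correct and follows essentially the same route as the paper: existence by comparing an arbitrary competitor with its truncations, chosen along radii where the slice carries vanishing $\hausmoins$-measure (the paper's coarea argument, with balls in place of your cubes and phrased as a contradiction rather than directly), followed by the quasi-minimality inequality $P(E)\le P(G)+(\ginf+\mu)|E\Delta G|$ and the regularity theory for quasi-minimizers in dimension $d\le 7$. If anything, your regularity step is more detailed than the paper's, which cites the quasi-minimizer theory directly for the $\C^{2,\alpha}$ conclusion, whereas you make explicit the $\C^{1,\beta}$-to-$\C^{2,\alpha}$ bootstrap via the Euler--Lagrange equation (including the volume-preserving variations and Lagrange multiplier in the case $|E|=v$) and Schauder estimates.
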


\begin{proof} 
By Proposition \ref{R0} there exists $R_0$ such that $E_R \subset B_{R_0}$ for every $R>0$. Suppose now that there exists $E$ with $F_\mu(E) <F_\mu(E_{R_0})$. Then there exists $\eps >0$ such that
\[ F_\mu(E) +\eps \leq F_\mu(E_{R_0}).\]
Let us show that there exists $R>R_0$ such that 
\[ 
F_\mu(E\cap B_{R}) +\frac{\eps}{2} \leq F_\mu(E_{R_0}).
\]
We start by noticing that $|E\cap B_R|$ tends to $|E|$ and that $\displaystyle \int_{E\cap B_R} g \,dx$ tends to 
$\displaystyle \int_E g\,dx$ when $R\to +\infty$. On the other hand,
\[P(E\cap B_R)=  \hausmoins(E \cap \partial B_R)+\hausmoins(\pE \cap B_R)\]
and we have
\[
\lim_{R\rightarrow + \infty} \hausmoins(\pE \cap B_R)=P(E)
\]
and 
\[
 \lim_{R\rightarrow + \infty} \int_0^R \hausmoins(E \cap \partial B_s) ds= 
 \lim_{R\rightarrow + \infty}|E\cap B_R|=
 |E|. \]
The last equality shows that $\hausmoins(E \cap \partial B_R)$ is integrable so that, for every $R>0$, 
there exists $R'>R$ such that $\hausmoins(E \cap \partial B_{R'})$ is arbitrarily small. 
This implies that we can find a $R$ large enough so that 
\[ F_\mu(E\cap B_{R}) +\frac{\eps}{2} \leq F_\mu(E_{R_0}).\]
The minimality of $E_{R_0}$ yields to a contradiction.\\

We now focus on the regularity. Let E be a minimizer of $F_\mu$ then for every $G$,
\[P(E)-\int_E g\,dx +\mu \bbar |E|-v \bbar \leq P(G) - \int_G g\,dx + \mu \bbar |G|-v \bbar.\]

\noindent Hence
\begin{align*}
 P(E) &\leq P(G)+ \ginf |E \Delta G| + \mu  \bbar |E|-|G| \bbar\\
	& \leq P(G)+(\ginf +\mu) |E \Delta G|.
\end{align*}
$E$ is thus a quasi-minimizer of the perimeter so that, by classical regularity theory \cite{giusti} (see also \cite{morgan2}), 
we get that $\partial E$ is of class $\mathcal{C}^{2,\alpha}$. 
\end{proof}

In order to prove the equivalence between the constrained and unconstrained problems, we will need the following geometric inequality. In the case of convex sets, it directly follows from the
 Alexandrov-Fenchel inequality  (see Schneider \cite{schneider}). For general smooth compact sets with positive mean curvature, it follows from \cite[Cor. 4.6]{greco}.
We include a short proof for the reader's convenience.

\begin{lem}
Let $E$ be a compact set with $\mathcal{C}^2$ boundary and assume that $\kappa>0$ on $\partial E$, where 
$\kappa$ denotes the mean curvature of $\partial E$. Then
\begin{equation}\label{alexfench}
 (d-1)P(E)\ge|E| \,\min_{\partial E} \kappa\,.
\end{equation}
 
\end{lem}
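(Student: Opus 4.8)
The plan is to deduce \eqref{alexfench} from a standard tube (Heintze--Karcher type) computation; in fact the argument yields the stronger inequality $(d-1)P(E)\ge d\,|E|\,\kappa_0$, where $\kappa_0:=\min_{\partial E}\kappa>0$. Let $\kappa_1(x),\dots,\kappa_{d-1}(x)$ denote the principal curvatures of $\partial E$ at $x$, oriented so that $\kappa=\sum_i\kappa_i$, and let $\nu$ be the outer normal. I would parametrize the interior of $E$ by the inner normal rays through the map $\Phi(x,t)=x-t\,\nu(x)$, $x\in\partial E$, $t\ge 0$. Two classical ingredients are needed. First, the Jacobian of $\Phi$ is $J\Phi(x,t)=\prod_{i=1}^{d-1}\bigl(1-t\kappa_i(x)\bigr)$. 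Second, every interior point $y\in E$ can be written as $y=\Phi(x,t)$ with $x$ a nearest boundary point and $t=\mathrm{dist}(y,\partial E)$, and at any such pair the open ball $B_t(y)$ lies inside $E$ and is tangent to $\partial E$ at $x$, which by interior-ball curvature comparison forces $t\,\kappa_i(x)\le 1$ for every $i$. Hence, up to the null set $\partial E$, one has $E\subseteq\Phi(A)$ with $A:=\{(x,t):x\in\partial E,\ t\ge0,\ 1-t\kappa_i(x)\ge0\ \forall i\}$, on which $J\Phi\ge0$.

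By the area formula this gives
\[
|E|\le\int_{\partial E}\Bigl(\int_{\{t\ge0:\,1-t\kappa_i(x)\ge0\,\forall i\}}\prod_{i=1}^{d-1}\bigl(1-t\kappa_i(x)\bigr)\,dt\Bigr)\,d\hausmoins(x).
\]
On the domain of integration all factors $1-t\kappa_i$ are nonnegative, so the arithmetic--geometric mean inequality yields $\prod_i(1-t\kappa_i)\le\bigl(1-\tfrac{t\kappa(x)}{d-1}\bigr)^{d-1}\le\bigl(1-\tfrac{t\kappa_0}{d-1}\bigr)^{d-1}$, the last step because $\kappa(x)\ge\kappa_0$. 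Enlarging the inner domain to $[0,(d-1)/\kappa_0]$ and computing $\int_0^{(d-1)/\kappa_0}\bigl(1-\tfrac{t\kappa_0}{d-1}\bigr)^{d-1}dt=\tfrac{d-1}{d\,\kappa_0}$, I obtain $|E|\le\tfrac{d-1}{d\,\kappa_0}\,P(E)$, that is $(d-1)P(E)\ge d\,|E|\,\kappa_0\ge|E|\,\min_{\partial E}\kappa$, which is \eqref{alexfench}.

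The routine points are the Jacobian formula and the AM--GM estimate; the delicate step, and the one I expect to be the main obstacle to write rigorously, is the covering claim together with the pointwise bound $t\kappa_i(x)\le 1$. This rests on the structure of the cut locus of $\partial E$ and on the interior-ball comparison: since $x$ realizes the distance from $y$, the ball $B_t(y)\subset E$ is tangent to $\partial E$ at $x$ from inside, so each principal curvature of $\partial E$ in the chosen orientation is bounded by that of the sphere $\partial B_t(y)$, namely $1/t$. Once this is granted, the measure-theoretic covering of $E$ by the normal rays up to the cut distance is standard and the computation above closes the proof. For \emph{convex} $E$ one can argue more directly: placing the origin inside $E$ one has $x\cdot\nu\ge0$ on $\partial E$, and the Minkowski identity $(d-1)P(E)=\int_{\partial E}\kappa\,(x\cdot\nu)\,d\hausmoins$, obtained by applying the tangential divergence theorem to the position field $X(x)=x$, immediately gives $(d-1)P(E)\ge\kappa_0\int_{\partial E}(x\cdot\nu)\,d\hausmoins=d\,\kappa_0\,|E|$.
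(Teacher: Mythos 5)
Your proof is correct, and it takes a genuinely different route from the paper's. You run the classical Heintze--Karcher tube argument: parametrize $E$ by inward normal rays $\Phi(x,t)=x-t\nu(x)$, bound the Jacobian $\prod_i(1-t\kappa_i)$ by AM--GM, and integrate. The key steps all check out: a nearest boundary point $x$ to an interior point $y$ does satisfy $y=x-t\nu(x)$ with $t=\mathrm{dist}(y,\partial E)$; the open ball $B_t(y)$, contained in $E$ and tangent to $\partial E$ at $x$, forces $\kappa_i(x)\le 1/t$; the AM--GM step is legitimate because all factors are nonnegative on the domain of integration; and on that domain $t\le (d-1)/\kappa(x)\le (d-1)/\kappa_0$, so enlarging the interval of integration to $[0,(d-1)/\kappa_0]$ is justified. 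The paper instead applies the coarea formula to the distance function $b(x)=\mathrm{dist}(x,\R^d\setminus E)$ and reduces the lemma to the monotonicity $P(\{b>t\})\le P(E)$, which it proves by showing $\Delta b\le 0$ in the sense of measures (via the convexity of $\kappa\mapsto\kappa/(1-b\kappa)$ and the nonpositivity of the singular part of $D^2 b$ on the cut locus), and then integrating by parts with a Gauss--Green theorem for divergence-measure fields. Your approach buys two things: a strictly stronger inequality, $(d-1)P(E)\ge d\,|E|\min_{\partial E}\kappa$, which is sharp for balls (the paper's constant is off by the factor $d$), and a lighter toolbox --- you only need the area formula for the $C^1$ map $\Phi$ and elementary tangency comparison; in particular, contrary to what you anticipate in your final paragraph, no structure theory of the cut locus is required, since the one-sided area formula $|\Phi(A)|\le\int_A J\Phi$ holds with no need for the non-covered or multiply-covered set to be null or rectifiable. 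What the paper's route buys is the intermediate statement $P(\{b>t\})\le P(E)$ for the inner parallel sets, of independent geometric interest, with the cut-locus subtleties confined to the sign of the measure $\Delta b$. Your closing remark on the convex case via the Minkowski identity is also correct, and matches the paper's observation that for convex sets the inequality follows from Alexandrov--Fenchel.
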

\begin{proof}
Let $\Lambda=\min_{\partial E} \kappa$ then no point of $E$ is at distance of $\partial E$ greater than $\frac{d-1}{\Lambda}$. Indeed, if $x\in E$,
 considering the ball $B(x,R)$ centered in $x$ and of radius $R$, with $R$ the smallest radius such that $\partial E\cap B(x,R)\neq \emptyset$ then $R\leq \frac{d-1}{\Lambda}$ 
since the points of $\partial E\cap B(x,R)$ have curvature less than $\frac{d-1}{R}$.  
Let now $b(x)=dist(x,\R^d\backslash E)$ be the distance function to the complementary of $E$. By the Coarea Formula \cite{AFP}, we have

\[|E|=\int_0^{\frac{d-1}{\Lambda}} P( \{b>t\})\, dt\]
from which we deduce \eqref{alexfench} provided that 
$$
P( \{b>t\})\le P(\{b>0\})=P(E)
$$ 
for a.e. $t>0$. We now prove this inequality.

As $b$ is locally semi-concave in $E$ (see \cite{MM}), that is
$D^2 b\le C\, {\rm Id}$ in the sense of measures,
the singular part of $D^2 b$ is a negative measure. Moreover, letting $Sing$ be the set where $b$ is not differentiable and letting $S=\overline{Sing}$, 
we have that $Sing$ corresponds to the set of points having more than one projection on $\partial E$, 
$b$ is $\mathcal{C}^2$ out of $S$, and $S$ is of zero Lebesgue measure \cite{CM} (and even  $(d-1)$-rectifiable if $\partial E$ is $\mathcal{C}^3$ \cite{MM}).
 The hypothesis that $\partial E$ is $\mathcal{C}^2$ is sharp since there exists sets with $\mathcal{C}^{1,1}$ boundary such that the cut locus is of positive Lebesgue measure \cite{MM}.
The set $S$ is sometime called the {\em cut locus} of $\partial E$.
We refer to \cite{Amb,MM} for a proof of these properties of the distance function $b$. 

If $x\in\{b=t\}$ is a point out of  $S$, by the smoothness of $b$ and by classical formulas there holds \cite{Amb}
\[- \Delta b(x)=\kappa_{\{b=t\}}(x)=\sum_{i=1}^{d-1} \frac{\kappa_i(\pi(x))}{1-b(x)\kappa_i(\pi(x))}\]
where $\pi(x)$ is the (unique) projection of $x$ on $\partial E$ and where $\kappa_i$ are the principal curvatures of $\partial E$. 
By the convexity of the function $\kappa\to \kappa/(1-b\kappa)$, and recalling that the mean curvature of $\partial E$ is positive, 
we get that $\Delta b(x)\le 0$ on $E\backslash S$. 
Finally, since the singular part of the measure $\Delta b$ (which is concentrated on $S$) is non positive, we find that $\Delta b\le 0$ in the sense of measures.

By the Coarea Formula, for a.e. $t>0$ we have  $\hausmoins(\partial \{b>t\}\cap S)=0$, so that for such $t$'s 
\begin{equation}\label{eqpart}
P( \{b>t\})- P(E)=\int_{\{b=t\}} \nabla b \cdot \nu + \int_{\partial E} \nabla b \cdot \nu=\int_{\{0<b<t\}} \Delta b \le 0	\,,
\end{equation}
where $\nu$ denotes the exterior unit normal to the set $\{0<b<t\}$, so that $\nu=-\nabla b$ on $\partial E$
and $\nu=\nabla b$ on $\{b=t\}\setminus S$.

As the vector field $\nabla b$ is bounded and its divergence $\Delta b$ is a Radon measure,
the integration by part formula in \eqref{eqpart} is justified by a result of Chen, Torres and Ziemer \cite[Th. 21.1 (g)]{CTZ}. 
Notice also that, since $\nabla b$ is continuous on $\{b=t\}\setminus S$, 
the (weak) normal trace of $\nabla b$ on $\{b=t\}$ coincides with $\nabla b \cdot \nu$ on $\{b=t\}\setminus S$ \cite[Th. 27.1]{CTZ}. 
\end{proof}
 
\begin{remarque}\rm
 Under the hypothesis $\Lambda:=\min_{\partial E} \kappa>0$, one could also replace \eqref{alexfench} by
\[P(E) R_{\max}\ge |E|\]
where $R_{\max}\le\frac{d-1}{\Lambda}$ is the radius of the largest ball contained in $E$.

\end{remarque}

\begin{remarque}\rm
Notice that the inequality
 \begin{equation}\label{falseineq}
\frac{d-1}{d}\,P(E)^2\ge |E|\int_{\partial E} \kappa
\end{equation}
which  is one of the Alexandrov-Fenchel inequalities (and which implies \eqref{alexfench}) does not hold for a general smooth compact set.  
Indeed, for $d=2$ we can consider a disjoint union of $N$ balls of radius $r_i$, so that the left hand-side is of order $\left(\sum_i r_i \right)^2$ and the
right hand-side is of order $N \left(\sum_i r_i^2\right)$. Hence, if we let $r_i=1/i^2$, 
we get that the left 
hand-side remains bounded while the right hand-side blows-up when the number of balls $N$ increases, thus violating \eqref{falseineq}. 
\end{remarque}

We are finally in position to prove existence of minimizers of problem \eqref{con}. 
\begin{thm}\label{thexist}
Let $d\le 7$, then for all $v>0$ there exists a compact minimizer $E_v$ of (\ref{con}) with $\partial E_v$ of class $\mathcal C^{2,\alpha}$. 
Moreover, $E_v$ is also a minimizer of $F_\mu$ for all 
\begin{equation}\label{eqmmu}
\mu \geq C_1(d) \ginf +C_2(d) v^{-\frac{1}{d}}
\end{equation}
where $C_1(d)$ and $C_2(d)$ are two positive constants depending only on $d$. 
\end{thm}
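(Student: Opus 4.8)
The plan is to follow the strategy announced at the start of the section: produce a bounded smooth minimizer of the penalized functional $F_\mu$ and show that, once $\mu$ is large enough, such a minimizer is forced to have volume exactly $v$; the equivalence between \eqref{con} and \eqref{uncon} then follows at once. First I would fix $\mu>\ginf$ and invoke Proposition \ref{proper} to obtain a bounded minimizer $E$ of $F_\mu$ with $\mathcal C^{2,\alpha}$ boundary. The heart of the matter is to prove that $|E|=v$ whenever $\mu$ satisfies \eqref{eqmmu}, and I would do this by ruling out both strict inequalities.

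On the open region $\{|E|>v\}$ the penalization is smooth, and the first variation of $F_\mu$ gives the Euler--Lagrange equation $\kappa=g-\mu$ on $\partial E$; since $\mu>\ginf$ this forces $\kappa<0$ everywhere, which is impossible for a compact set. Indeed, taking the smallest ball $B_R$ containing $E$ and a contact point $x_0\in\partial E\cap\partial B_R$, the sphere touches $\partial E$ from outside, so $\kappa(x_0)\ge (d-1)/R>0$, a contradiction. Hence $|E|\le v$ as soon as $\mu>\ginf$. The delicate case is $|E|<v$: here the first variation yields $\kappa=g+\mu$ on $\partial E$, so $\min_{\partial E}\kappa\ge \mu-\ginf>0$ and the geometric inequality \eqref{alexfench} applies, giving the upper bound $|E|\le (d-1)P(E)/(\mu-\ginf)$.

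To turn this into a contradiction I need a matching lower bound on $|E|$ and an upper bound on $P(E)$. Comparing $E$ with a ball $B^v$ of volume $v$ gives $F_\mu(E)\le F_\mu(B^v)\le c(d)v^{\frac{d-1}{d}}+\ginf v$. Plugging this into Lemma \ref{vol} bounds $v-|E|=\bbar|E|-v\bbar$ by $K/(\mu-\ginf)$ with $K:=c(d)v^{\frac{d-1}{d}}+2\ginf v$, while the identity $P(E)=F_\mu(E)+\int_E g-\mu(v-|E|)$ together with $|E|<v$ gives $P(E)\le K$. Combining the two bounds on $|E|$,
\[
v-\frac{K}{\mu-\ginf}\le |E|\le \frac{(d-1)K}{\mu-\ginf},
\]
which is impossible as soon as $v>dK/(\mu-\ginf)$, that is $\mu-\ginf>dc(d)v^{-\frac1d}+2d\ginf$. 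This is precisely \eqref{eqmmu} with $C_1(d)=2d+1$ and $C_2(d)=dc(d)$, so the case $|E|<v$ is excluded and $|E|=v$.

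Finally I would close the loop. Setting $E_v:=E$, for every competitor $G$ with $|G|=v$ one has $F(G)=F_\mu(G)\ge F_\mu(E_v)=F(E_v)$, so $E_v$ minimizes \eqref{con} and inherits the $\mathcal C^{2,\alpha}$ regularity from Proposition \ref{proper}. For the last assertion, for each $\mu$ in the range \eqref{eqmmu} the same argument shows that any minimizer of $F_\mu$ has volume $v$, whence $\min_G F_\mu(G)=F(E)\ge f(v)=F_\mu(E_v)\ge \min_G F_\mu(G)$; equality throughout shows that $E_v$ itself minimizes $F_\mu$. I expect the exclusion of $|E|<v$ to be the main obstacle: it is exactly here that the dimensional restriction $d\le 7$ (guaranteeing $\mathcal C^2$ regularity, hence a genuine pointwise curvature) and the Alexandrov--Fenchel-type inequality \eqref{alexfench} are indispensable, a naive perimeter/volume comparison being too weak to produce the required lower bound on $|E|$.
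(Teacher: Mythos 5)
Your proof is correct and follows essentially the same route as the paper: Proposition \ref{proper} supplies a smooth bounded minimizer of $F_\mu$, the case $|E|>v$ is excluded via the Euler--Lagrange equation $\kappa=g-\mu<0$ and compactness, and the case $|E|<v$ is excluded by combining \eqref{alexfench}, Lemma \ref{vol} and comparison with a ball of volume $v$. The only differences are organizational: you obtain the contradiction from a two-sided bound on $|E|$ (with explicit constants $C_1=2d+1$, $C_2=dc(d)$), whereas the paper bounds $F_\mu(E_\mu)$ from below via \eqref{alexfench} and $|E_\mu|\ge v/2$ and from above by $F_\mu(B^v)$, and you also spell out the final deduction that the same $E_v$ minimizes $F_\mu$ for every $\mu$ in the admissible range, which the paper leaves implicit.
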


\begin{proof}
Letting $E_\mu$ be a bounded and smooth minimizer of $F_\mu$, given by Proposition \ref{proper}, 
We will show that $|E_\mu|=v$, for $\mu$ large enough. 
Let $\mu$ be larger than $\ginf$ and suppose by contradiction $|E_\mu| \neq v$. 
Then, if $|E_\mu| >v$, the Euler-Lagrange equation for $F_\mu$ writes
\[ 
\kappa_{E_\mu} = g-\mu 
\]
where $\kappa_{E_\mu}$ is the mean curvature of $E_\mu$. But this is impossible since $\mu> \ginf$, which 
would lead to $\kappa_{E_\mu} <0$, contradicting the compactness of $E_\mu$. 

Thus for $\mu>\ginf$, we have $|E_\mu| <v$ and 
\[ 
\kappa_{E_\mu} = g+\mu .
\]

\noindent Using inequality (\ref{alexfench}) with $E=E_\mu$, and the fact that $|E_\mu| \ge v/2$ by Lemma \ref{vol}, we get
\begin{align*} 
F_\mu(E_\mu)&\geq \frac{1}{d-1} (\mu-\ginf)|E_\mu|-\ginf|E_\mu|\\
	    &\geq \frac{1}{d-1} (\mu-\ginf)\frac{v}{2}-\ginf v.
\end{align*}
On the other hand, $F_\mu(E_\mu)\leq F_\mu(B^v)$, where $B^v$ is a ball of volume $v$, so that
\[C(d) v^\frac{d-1}{d}+\ginf v\geq F_\mu(B^v)\geq  \frac{1}{d-1} (\mu-\ginf)\frac{v}{2}-\ginf v\]
and we finally obtain
\[
\mu \leq C_1(d) \ginf+ C_2(d)v^{-\frac{1}{d}}. 
\]
\end{proof}

\begin{remarque}\rm
The minimizer $E_v$ satisfies the Euler-Lagrange equation
\[
\kappa_E=g+\lambda_v \qquad \textrm{ with } |\lambda_v|\leq \mu,
\]
where $\mu$ verifies \eqref{eqmmu}. In particular, 
$\lambda_v$ and thus also $\|\kappa_E\|_\infty$ are uniformly bounded in $v$, for $v\in [\eps,+\infty)$.

\smallskip

The regularity of $\partial E_v$  also follows from the works of Rigot \cite{rigot} and Xia \cite{xia} 
on quasi-minimizers of the perimeter with a volume constraint.
\end{remarque}

\section{Properties of the isovolumetric function}\label{secprop}
We show here some of the properties of the isovolumetric  $f$ defined by \eqref{con}.

\begin{prop}\label{lipschitz}
The function $f$ is sub-additive and locally Lipschitz continuous. Let $v$ be a point of differentiability of $f$ and $E_v$ be a minimizer of (\ref{con}) then $f'(v)=\lambda_v$ where $\lambda_v$ is the Lagrange multiplier associated to $E_v$, that is,  
$\displaystyle \kappa_{E_v}= g+\lambda_v$. As a consequence, $\lambda_v$ is unique for almost every $v>0$, in the sense that it does not depend on the specific minimizer $E_v$.
\end{prop}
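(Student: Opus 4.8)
The plan is to establish the three assertions in order: subadditivity, local Lipschitz continuity, and the identification of the derivative with the Lagrange multiplier.

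\medskip

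\textbf{Subadditivity.} I would show $f(v_1+v_2)\le f(v_1)+f(v_2)$ by taking minimizers $E_{v_1}$ and $E_{v_2}$ of \eqref{con} and placing a far-away translate of $E_{v_2}$ disjoint from $E_{v_1}$. Since $E_{v_1}$ is compact (by Theorem \ref{thexist}) and $g$ is periodic, translating $E_{v_2}$ by a lattice vector $z\in\Z^d$ leaves $\int_{E_{v_2}+z} g\,dx$ unchanged and, for $|z|$ large, makes the two sets disjoint. Then $E:=E_{v_1}\cup(E_{v_2}+z)$ has volume $v_1+v_2$, perimeter $P(E_{v_1})+P(E_{v_2})$, and $\int_E g = \int_{E_{v_1}}g+\int_{E_{v_2}}g$, giving $f(v_1+v_2)\le F(E)=F(E_{v_1})+F(E_{v_2})=f(v_1)+f(v_2)$.

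\medskip

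\textbf{Local Lipschitz continuity.} Here I would exploit the equivalence with the penalized problem from Theorem \ref{thexist}. Fix a compact interval $[a,b]\subset(0,+\infty)$ and recall that for $\mu\ge C_1(d)\ginf+C_2(d)a^{-1/d}=:\mu_0$, every minimizer $E_w$ of \eqref{con} with $w\in[a,b]$ is also a minimizer of $F_\mu$, i.e. $f(w)=F_\mu(E_w)=\min_E F_\mu(E)$. Now for $v,w\in[a,b]$, testing the minimality of $F_\mu$ at level $w$ against $E_v$ and using $F_\mu(E_v)=F(E_v)+\mu|\,|E_v|-w|=f(v)+\mu|v-w|$ gives $f(w)=\min_E F_\mu(E)\le f(v)+\mu|v-w|$. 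By symmetry $|f(v)-f(w)|\le\mu_0|v-w|$, which is exactly local Lipschitz continuity with a uniform constant on $[a,b]$.

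\medskip

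\textbf{Derivative equals the multiplier.} This is the main obstacle and the step requiring the most care. At a point $v$ where $f$ is differentiable, I would compute $f'(v)$ by a one-parameter volume perturbation of the minimizer $E_v$. The natural construction is to deform $\partial E_v$ along its outer normal $\nu$ by a smooth normal velocity, producing a family $E_t$ with $\frac{d}{dt}|E_t|\big|_{t=0}=\int_{\partial E_v}\zeta\,d\hausmoins$ and $\frac{d}{dt}P(E_t)\big|_{t=0}=\int_{\partial E_v}\kappa_{E_v}\,\zeta\,d\hausmoins$; choosing $\zeta\equiv 1$ (a uniform outward push) gives volume derivative $P(E_v)$ and, using the Euler--Lagrange relation $\kappa_{E_v}=g+\lambda_v$, first variation of $F$ equal to $\int_{\partial E_v}(\kappa_{E_v}-g)\,d\hausmoins=\lambda_v P(E_v)$. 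Since $F(E_t)\ge f(|E_t|)$ with equality at $t=0$, and $|E_t|$ is a smooth increasing function of $t$ near $0$, comparing the two sides and dividing by $\frac{d}{dt}|E_t|=P(E_v)$ yields, from the differentiability of $f$, the right and left derivatives pinned to $\lambda_v$, hence $f'(v)=\lambda_v$. The delicate points are that the $\mathcal C^{2,\alpha}$ regularity of $\partial E_v$ from Theorem \ref{thexist} is needed to justify the first-variation formulas, and that $\lambda_v$ is a well-defined constant on $\partial E_v$ precisely because the Euler--Lagrange equation forces $\kappa_{E_v}-g$ to be constant. The final uniqueness claim is then immediate: since $f$ is Lipschitz it is differentiable almost everywhere, and at each such point the identity $f'(v)=\lambda_v$ forces every minimizer to share the same multiplier, independent of the choice of $E_v$.
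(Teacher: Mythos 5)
Your proof is correct and follows essentially the same route as the paper: subadditivity via disjoint, far-away lattice translations of the two minimizers, local Lipschitz continuity via the equivalence with the penalized functional $F_\mu$ from Theorem \ref{thexist}, and the identification $f'(v)=\lambda_v$ via a first-variation argument along a volume-changing family through $E_v$, combined with the comparison $F(E_t)\ge f(|E_t|)$ (equality at $t=0$), the Euler--Lagrange equation, and two-sided limits. The only difference is in the last step, where the paper deforms $E_v$ by the dilations $(1+\eps/v)^{1/d}E_v$, so that the multiplier enters through $\delta_\eps\int_{\partial E_v}\lambda_v\, x\cdot\nu\,d\hausmoins=\delta_\eps\lambda_v d|E_v|$, while you push $\partial E_v$ along its normal at unit speed, so that it enters through $\lambda_v P(E_v)$ against the volume derivative $P(E_v)$; the two computations are interchangeable and both rest on the same $\mathcal{C}^{2,\alpha}$ regularity of $\partial E_v$.
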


\begin{proof}
Let $E_v$ and $E_{v'}$ be compact minimizers associated to $v$ and $v'$.
Up to a translation we can suppose that $F(E_v \cup E_{v'})=F(E_v)+F(E_{v'})$, so that
\[f(v+v')\leq   F(E_v \cup E_{v'})=F(E_v)+F(E_{v'})=f(v)+f(v')\]
and $f$ is sub-additive.

By Theorem \ref{thexist}, for every $\alpha>0$ there exists $\mu_\alpha$ 
such that, for every $v\geq \alpha$, the constrained problem (\ref{con}) 
and the relaxed one (\ref{uncon}) are equivalent for $\mu\geq \mu_\alpha$. 
Let $v,v' \in [\alpha,+\infty)$, then 
\[f(v)=F(E_v)\leq P(E_{v'})- \int_{E_{v'}} g\,dx + \mu_\alpha |v-v'|=f(v')+\mu_\alpha |v-v'|\]
thus $|f(v)-f(v')|\leq \mu_\alpha|v-v'|$ and $f$ is Lipschitz continuous on $[\alpha,+\infty)$.

We now compute the derivative of $f$. For $v,\eps>0$ we have
\[f(v+\eps)-f(v)\leq F( (1+\eps/v)^{\frac{1}{d}}E_v)-F(E_v).\]

\noindent Let $\delta_\eps=(1+\eps/v)^{\frac{1}{d}} -1$; then $(1+\eps/v)^{\frac{1}{d}}E_v
=E_v+\delta_\eps E_v$. Recalling that $\displaystyle \kappa_{E_v}= g+\lambda_v$ we get
\begin{align*}
P((1+\delta_\eps)E_v)&= P(E_v)+\delta_\eps \int_{\partial E_v} \kappa_{E_v} x \cdot \nu \,d\mathcal{H}^{d-1}
+ o(\delta_\eps)\\
&=P(E_v)+\delta_\eps \int_{\partial E_v} g(x) x \cdot \nu\,d\mathcal{H}^{d-1}
+\delta_\eps \int_{\partial E_v} \lambda_v x \cdot \nu \,d\mathcal{H}^{d-1}
+ o(\delta_\eps)\\
&=P(E_v)+\delta_\eps \int_{\partial E_v} g(x) x \cdot \nu\,d\mathcal{H}^{d-1} +\delta_\eps  \lambda_v d |E_v|+ o(\delta_\eps)
\end{align*}
and
\[
\int_{(1+\delta_\eps)E_v} g =\int_{E_v} g\,dx +\delta_\eps \int_{\partial E_v} g(x)  x \cdot \nu\,d\mathcal{H}^{d-1} +o(\delta_\eps).
\]
From this we obtain
\[F( (1+\eps/v)^{\frac{1}{d}}E_v)-F(E_v)=\delta_\eps v d \lambda_v+ o(\delta_\eps).\]
As $\delta_\eps= \eps/(v d) + o(\eps)$, we find
\begin{align*}
&\limsup_{\eps \rightarrow 0^+} \, \frac{f(v+\eps)-f(v)}{\eps} \leq \lambda_v 
\\
&\liminf_{\eps \rightarrow 0^-} \, \frac{f(v+\eps)-f(v)}{\eps} \geq \lambda_v.
\end{align*}
In particular, if $f$ is differentiable in $v$ we have 
\[f'(v)=\lambda_v .\]
\end{proof}

In fact, the isovolumetric function $f$ is slightly more regular.
\begin{prop}\label{proder}
 Let $\lambda_v^{\max}$ and $\lambda_v^{\min}$ be respectively the bigger and the smaller Lagrange multipliers associated with $v$ then $f$ has left and right derivatives in $v$ and
\begin{equation}\label{eqder}
\lim_{h \to 0^+} \frac{f(v+h)-f(v)}{h}=\lambda_v^{\min} \le \lambda_v^{\max} = 
\lim_{h \to 0^-} \frac{f(v+h)-f(v)}{h}.
\end{equation}
\end{prop}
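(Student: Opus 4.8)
The plan is to refine the one-sided estimates already obtained in Proposition~\ref{lipschitz} by identifying the correct Lagrange multiplier to use in each of the two dilation bounds. In the proof of Proposition~\ref{lipschitz} we bounded $f(v+\eps)-f(v)$ from above by comparing $E_{v+\eps}$ with a rescaling $(1+\eps/v)^{1/d}E_v$ of a minimizer at volume $v$, and the computation produced the multiplier $\lambda_v$ of that particular minimizer. The key observation is that this comparison is valid for \emph{every} minimizer $E_v$ at volume $v$, and hence for every associated multiplier; so we are free to optimize the choice. For the right derivative (letting $\eps\to 0^+$, so we dilate \emph{outward}) the inequality $\limsup_{\eps\to 0^+}(f(v+\eps)-f(v))/\eps\le\lambda_v$ holds for each multiplier, hence with $\lambda_v$ replaced by $\lambda_v^{\min}$. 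Symmetrically, for the left derivative ($\eps\to 0^-$, dilating \emph{inward}) the bound $\liminf_{\eps\to 0^-}(f(v+\eps)-f(v))/\eps\ge\lambda_v$ holds for each multiplier, hence we may take $\lambda_v^{\max}$, giving
\[
\limsup_{h\to 0^+}\frac{f(v+h)-f(v)}{h}\le \lambda_v^{\min},
\qquad
\liminf_{h\to 0^-}\frac{f(v+h)-f(v)}{h}\ge \lambda_v^{\max}.
\]

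Next I would produce the reverse inequalities, which come from using a minimizer at the \emph{perturbed} volume as the competitor, rather than one at $v$. Concretely, for $h>0$ I would take a minimizer $E_{v+h}$ at volume $v+h$ and shrink it by the factor $(1+h/v)^{-1/d}$ to obtain a competitor of volume exactly $v$; this yields $f(v)-f(v+h)\le F((v/(v+h))^{1/d}E_{v+h})-F(E_{v+h})$, and the same first-order dilation expansion (now governed by the multiplier $\lambda_{v+h}$ of $E_{v+h}$) gives a lower bound for $(f(v+h)-f(v))/h$ in terms of $\lambda_{v+h}$. The remaining task is to understand the limiting behavior of $\lambda_{v+h}$ as $h\to 0^+$. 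The natural route is an upper semicontinuity / selection argument: take a sequence $h_n\to 0^+$ realizing the liminf of $(f(v+h)-f(v))/h$, pass to a limit of the minimizers $E_{v+h_n}$ (using the uniform diameter and perimeter bounds from Propositions~\ref{R0} and~\ref{proper} together with the uniform multiplier bound $|\lambda_v|\le\mu$ from the remark after Theorem~\ref{thexist} to extract an $L^1_{loc}$-convergent subsequence with limit a minimizer $E_v$ at volume $v$), and show that the curvatures $\kappa_{E_{v+h_n}}=g+\lambda_{v+h_n}$ converge, via $\mathcal C^{2,\alpha}$ regularity and compactness, to $g+\lambda$ for some multiplier $\lambda$ associated to the limiting minimizer. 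This forces $\liminf_{h\to 0^+}(f(v+h)-f(v))/h\ge\lambda\ge\lambda_v^{\min}$, and combined with the upper bound above pins the right derivative to $\lambda_v^{\min}$. The left derivative is handled by the mirror-image argument and gives $\lambda_v^{\max}$.

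Finally, the ordering $\lambda_v^{\min}\le\lambda_v^{\max}$ is immediate from the definition, and the monotonicity $\lim_{h\to 0^+}\le\lim_{h\to 0^-}$ it encodes is consistent with the concave-type behavior one expects from the subadditivity of $f$ established in Proposition~\ref{lipschitz}.

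I expect the main obstacle to be the compactness step that identifies the limit of $\lambda_{v+h_n}$ as a \emph{bona fide} multiplier of a minimizer at volume $v$, i.e. passing to the limit in both the minimizers and their Euler--Lagrange equations simultaneously. One must rule out loss of mass at infinity (guaranteed by the uniform diameter bound, after translating so the minimizers stay in a fixed cube) and ensure lower semicontinuity of $F$ together with convergence of the volume, so that the limit set is genuinely volume-$v$ and genuinely minimizing; only then does its curvature $g+\lambda$ exhibit $\lambda$ as an admissible Lagrange multiplier at $v$, allowing the comparison $\lambda\ge\lambda_v^{\min}$. The $\mathcal C^{2,\alpha}$ regularity and the uniform curvature bounds, valid on the whole range $v\in[\eps,+\infty)$, are exactly what make this convergence of the boundaries—and hence of the multipliers—possible; the delicate point is that a priori the smallest multiplier at $v+h_n$ need not converge to the smallest multiplier at $v$, which is why the argument only yields the one-sided comparison with $\lambda_v^{\min}$ (respectively $\lambda_v^{\max}$), precisely matching the asymmetry in~\eqref{eqder}.
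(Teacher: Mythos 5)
Your proposal is correct, and its skeleton---upper bound on the right difference quotient by outward dilation of a minimizer at volume $v$, optimized over all multipliers to get $\lambda_v^{\min}$; lower bound by extracting a limit of minimizers $E_{v+h_n}$ and identifying the limit of their multipliers as a \emph{bona fide} multiplier at volume $v$; mirror argument on the left---matches the paper's overall architecture, and the compactness step you correctly single out as the crux is exactly the paper's Lemma~\ref{conv} (Hausdorff convergence of the boundaries via the density estimates of Proposition~\ref{densite}, local graph representation, and passage to the limit in the Euler--Lagrange equation by elliptic regularity). Where you genuinely diverge from the paper is the mechanism for the lower bound. You shrink the minimizer $E_{v+h}$ down to volume $v$ and apply the first-order dilation expansion \emph{around the moving sets} $E_{v+h}$, obtaining $\bigl(f(v+h)-f(v)\bigr)/h\ge \lambda_{v+h}+o(1)$. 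The paper instead never expands around a varying set: it uses the local Lipschitz continuity of $f$ (Proposition~\ref{lipschitz}), hence a.e.\ differentiability with $f'=\lambda$ at differentiability points, together with a mean-value inequality $\bigl(f(v+\eps)-f(v)\bigr)/\eps\ge f'(v_\eps)$ for some intermediate differentiability point $v_\eps$; the compactness lemma is then applied along a sequence realizing $\liminf_{\eps\to 0^+}f'(v+\eps)$, and the dilation expansion is invoked only once, at the \emph{fixed} limit minimizer. The price of your shortcut is a point you leave implicit: since $\delta\sim -h/(dv)$ and the set vary simultaneously, you need the error in
\begin{equation*}
F\bigl((1+\delta)E_{v+h}\bigr)-F(E_{v+h})=\delta\,\lambda_{v+h}\,d\,|E_{v+h}|+o(\delta)
\end{equation*}
to be uniform in $h$, which is not automatic for a set-dependent $o(\delta)$. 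It does hold here: the perimeter part is exact by scaling, $P((1+\delta)E)=(1+\delta)^{d-1}P(E)$, combined with the identity $(d-1)P(E)=\int_{\partial E}\kappa\, x\cdot\nu\,d\mathcal{H}^{d-1}$, so only the term $\int_{(1+\delta)E}g$ needs an estimate, and the $\C^{0,\alpha}$ regularity of $g$ plus the uniform diameter, perimeter and multiplier bounds for $E_{v+h}$ (Proposition~\ref{R0} and the remark after Theorem~\ref{thexist}) give an error $O(\delta^{1+\alpha})$ with a constant independent of $h$. With that uniformity stated, your argument is complete; the paper's mean-value detour through the a.e.\ derivative of $f$ is precisely what lets it avoid proving such a uniform expansion.
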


The proof is based on the following lemma:
\begin{lem}\label{conv}
Let $v_n$ be a sequence converging to $v$. Then there exist sets $E_n$ with $|E_n|=v_n$ and 
\[
f(v_n)= F(E_n),
\]
and a set $E$ with $|E|=v$ and
\[
f(v)=F(E),
\]
such that, up to extraction, $E_n$ tends to $E$ in the $L^1$-topology,  $\partial E_n$ tends to $\partial E$ 
in the Hausdorff sense, and $\lambda_n$ tends to $\lambda$, where $\lambda_n$ (resp. $\lambda$) is the Lagrange multiplier 
corresponding to $E_n$ (resp. to $E$).
\end{lem}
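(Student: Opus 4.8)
The plan is to combine $BV$-compactness with the uniform regularity theory for quasi-minimizers of the perimeter. Since $v_n\to v>0$, we may assume $v_n\in[\alpha,\beta]$ for some $0<\alpha\le\beta<\infty$. By Theorem \ref{thexist} there is a single $\mu$ (large, depending only on $d$, $\ginf$ and $\alpha$, via \eqref{eqmmu} and $v_n^{-1/d}\le\alpha^{-1/d}$) such that for every $n$ a minimizer $E_n$ of \eqref{con} at volume $v_n$ is also a minimizer of $F_\mu$ (with $v$ replaced by $v_n$). As in Proposition \ref{proper}, each such $E_n$ is then a quasi-minimizer of the perimeter with constant $\ginf+\mu$ independent of $n$, and by Proposition \ref{R0} together with the $\Z^d$-periodicity of $g$ (integer translations preserve both $F_\mu$ and minimality) we may translate each $E_n$ so that it is contained in a fixed cube $Q_{R_0}$, with $R_0$ independent of $n$.

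First I would extract an $L^1$-limit. Since $P(E_n)=F(E_n)+\int_{E_n}g\,dx\le f(v_n)+\ginf v_n$ is uniformly bounded (using the continuity of $f$ from Proposition \ref{lipschitz}) and all $E_n$ lie in $Q_{R_0}$, compactness in $BV$ yields, up to a subsequence, a set of finite perimeter $E$ with $E_n\to E$ in $L^1$. Then $|E|=\lim_n|E_n|=\lim_n v_n=v$, and since $g$ is bounded, $\int_{E_n}g\,dx\to\int_E g\,dx$. Lower semicontinuity of the perimeter gives $F(E)\le\liminf_n F(E_n)=\liminf_n f(v_n)=f(v)$; as $|E|=v$ we also have $F(E)\ge f(v)$, so $F(E)=f(v)$ and $E$ is a minimizer at volume $v$.

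Next I would upgrade the convergence. Because the $E_n$ are quasi-minimizers with uniform constants, they satisfy uniform density estimates, the lower ones being exactly Proposition \ref{densite} (the matching upper bounds on $|E_n\cap B_r|$ and on the perimeter being standard for quasi-minimizers); together with the $L^1$ convergence this forces $\partial E_n\to\partial E$ in the Hausdorff sense. Moreover, by the $\eps$-regularity theory for quasi-minimizers in dimension $d\le 7$, the reduced boundaries are smooth with uniform $\mathcal C^{1,\beta}$ estimates, and the Euler--Lagrange equations $\kappa_{E_n}=g+\lambda_n$ combined with Schauder estimates promote these to uniform $\mathcal C^{2,\alpha}$ bounds. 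Since $|\lambda_n|\le\mu$ by the Remark after Theorem \ref{thexist}, up to a further subsequence $\lambda_n\to\lambda$, and the uniform $\mathcal C^{2,\alpha}$ bounds give, via Arzel\`a--Ascoli, local $\mathcal C^2$ convergence of $\partial E_n$ to $\partial E$. Passing to the limit in $\kappa_{E_n}=g+\lambda_n$ then yields $\kappa_E=g+\lambda$, so that $\lambda$ is a Lagrange multiplier associated with $E$, as required.

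The main obstacle is precisely this passage from $L^1$ to $\mathcal C^2$ convergence of the boundaries, that is, ensuring that the limit set inherits the Euler--Lagrange equation with the limiting multiplier. This hinges on having density and regularity estimates that are uniform in $n$; the restriction $d\le 7$ is what guarantees, through the standard theory, that the whole reduced boundary is smooth and that no singularities can appear in the limit. Once these uniform estimates are in place, both the Hausdorff convergence of the boundaries and the convergence of the multipliers follow by compactness.
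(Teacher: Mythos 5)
Your proposal is correct and follows essentially the same route as the paper: $L^1$-compactness from the uniform perimeter bound and uniform boundedness (Proposition \ref{R0}), minimality of the limit via lower semicontinuity and continuity of $f$, Hausdorff convergence from the density estimates of Proposition \ref{densite}, and convergence of the multipliers from the uniform bound $|\lambda_n|\le\mu$ followed by passage to the limit in the Euler--Lagrange equation. The only cosmetic difference is in the last step, where the paper cites Tamanini's result to write the boundaries locally as uniformly converging graphs and then applies elliptic regularity, while you invoke $\eps$-regularity plus Schauder estimates and Arzel\`a--Ascoli --- the same mechanism under a different name.
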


\begin{proof}
By Theorem \ref{thexist}, we can find minimizers $E_n$ of \eqref{con}, with $|E_n|=v_n$. Moreover, 
by Proposition \ref{R0} we can assume that $E_n \subset B_R$ with $R$ independent of $n$.
Since $P(E_n)$ is uniformly bounded from above, it then follows that there exists a (not relabelled) subsequence of $E_n$ 
converging in the $L^1$-topology to a set $E\subset B_R$ with volume $\displaystyle v=\lim_n v_n$.
Moreover, by the lower-semi-continuity of the perimeter and the continuity of $f$, the set $E$ verifies
\[
f(v)=F(E).
\]
Let us now prove that the convergence also occurs in the sense of Hausdorff.

Let $\eps>0$ be fixed and let $x \in E \cap \left\{y \; / \; d(y,\partial E)> \eps \right\}$.
If $x$ is not in $E_n$ then by Proposition \ref{densite} we have
\[
|E_n \Delta E|\geq |B_\eps(x)\backslash E_n|\geq \gamma \eps^d.
\] 
This is impossible if $n$ is big enough because $|E_n\Delta E|$ tends to zero. Similarly, we can show that for $n$ big enough, all the points of $E^c\cap\left\{y \; / \; d(y,\partial E)> \eps \right\}$ are outside $E_n$. This shows that  $\partial E_n \subset \left\{y \; / \; d(y,\partial E)\le \eps \right\}$. Inverting the r\^oles of $E_n$ and $E$, the same argument proves that $\partial E \subset \left\{y \; / \; d(y,\partial E_n)\le \eps \right\}$ giving the Hausdorff convergence of $\partial E_n$ to $\partial E$. Now if $\lambda_n$ is the Lagrange multiplier associated with $E_n$, it is uniformly bounded and we can extract a converging subsequence which converges to some $\lambda\in\R$. 

To conclude the proof we must show that $\kappa_E=g+\lambda$.
As proved for instance in \cite{tamanini}, 
for every $x\in \partial E$ there exists $r>0$ such that for $n$ 
large enough the set $B_r(x)\cap  \partial E_n$ 
is the graph of a function $\varphi_n$, and the set $B_r(x)\cap  \partial E$ is the graph of a function $\varphi$,
in a suitable coordinate system.
We then have that $\varphi_n$ tends uniformly to $\varphi$, as $n\to +\infty$, and 
\begin{equation}\label{eqphin}
-\dive\left(\frac{\nabla \varphi_n}{\sqrt{1+|\nabla \varphi_n|^2}}\right)
=g(x,\varphi_n(x))+\lambda_n
\end{equation}
for all $n$ big enough.
By elliptic regularity \cite{cafcabre}, we can pass to the limit in \eqref{eqphin}
and obtain that $\phi$ solves
\[
-\dive\left(\frac{\nabla \varphi}{\sqrt{1+|\nabla \varphi|^2}}\right)=
\kappa_E=g(x,\varphi(x))+\lambda.
\]
\end{proof}

\noindent {\em Proof of Proposition \ref{proder}.}
\,Let $v>0$ and let 
\begin{equation}\label{liminf}\lambda= \liminf_{\eps\to 0+} f'(v+\eps)
\end{equation}
Notice that, for every $\eps >0$, there exists a $v_\eps \in ]v,v+\eps[$ such that 
\begin{equation}\label{pala} 
f'(v_\eps)\leq \frac{f(v+\eps)-f(v)}{\eps}.
\end{equation}
{}From \eqref{pala} we get
\[
\lambda\leq \liminf_{\eps\to 0+} \frac{f(v+\eps)-f(v)}{\eps}.
\]
Let $\eps_n$ be a sequence realizing the infimum in (\ref{liminf}) and let $E_n\subset B_R$ be a set of volume $v_n=v+\eps_n$ such that
\[
f(v_n)=F(E_n).
\]
By Lemma \ref{conv} the sets $E_n$ converge, up to a subsequence in the $L^1$-topology,
to a limit set $E$, with $|E|=v$ and $\kappa_E=g+\lambda$, where $\displaystyle \lambda=\lim_n\lambda_n$. 
Reasoning as in Proposition \ref{lipschitz}, we see that
\[
\liminf_{\eps\to 0+} \frac{f(v+\eps)-f(v)}{\eps}\geq \lambda 
\geq \limsup_{\eps \rightarrow 0^+} \frac{f(v+\eps)-f(v)}{\eps}
\]
hence $f$ admits a right derivative which is equal to $\lambda_v^{min}$. 
Analogously one can show that $f$ has a left derivative equal to $\lambda_v^{max}$.
\qed

\begin{remarque}\label{rqf} \rm Notice that \eqref{eqder} implies that 
$f$ is differentiable at any local minimum so that, if equation 
\eqref{kappagi} has no solution, either $f$ is increasing on $[0,+\infty)$, 
or there exists $\overline{v}>0$ such that $f$ is increasing on $[0,\overline{v}]$, decreasing on $[\overline{v},+\infty)$,
and is not differentiable at $\overline{v}$.
\end{remarque}



We now give an example of a isovolumetric function $f$ which has a point of nondifferentiability. It is not clear to which extent this is 
a generic phenomenon.

\paragraph{Example.}
Consider a periodic function $g$ which is equal to $0$ everywhere in the unit cell $Q$, 
except in the neighborhood of two points $a$ and $b$. 
Around these points, $g$ is taken to be equal to radial parabolas centered at the point, 
one parabola high and thin, and the other small and large (see Figure \ref{counterexample}).

It is shown in \cite{figmag} that, when the volume $v$ is sufficiently small,
the minimizer $E_v$ is connected. Since the bound on $v$ depends only on $\ginf$, which can be fixed as small as we want, 
we can suppose that the minimizers $E_v$ are connected and are located near $a$ or $b$. 
By the isoperimetric inequality \cite{giusti} we then get that $E_v$ is a 
disk with volume $v$ centered at $a$ or $b$, and will be denoted by $D_v(a)$, $D_v(b)$, respectively.

Therefore, for small volumes the global minimizer is $D_v(a)$ and, once the equality 
\[\displaystyle \int_{D_v(a)} g=\int_{D_v(b)} g\]
is attained, it switches to the disk $D_v(b)$. 
When this transition occurs, there is a jump singularity of the derivative $f'$.

\begin{figure}[ht]
\centering{
\includegraphics[scale=0.5]{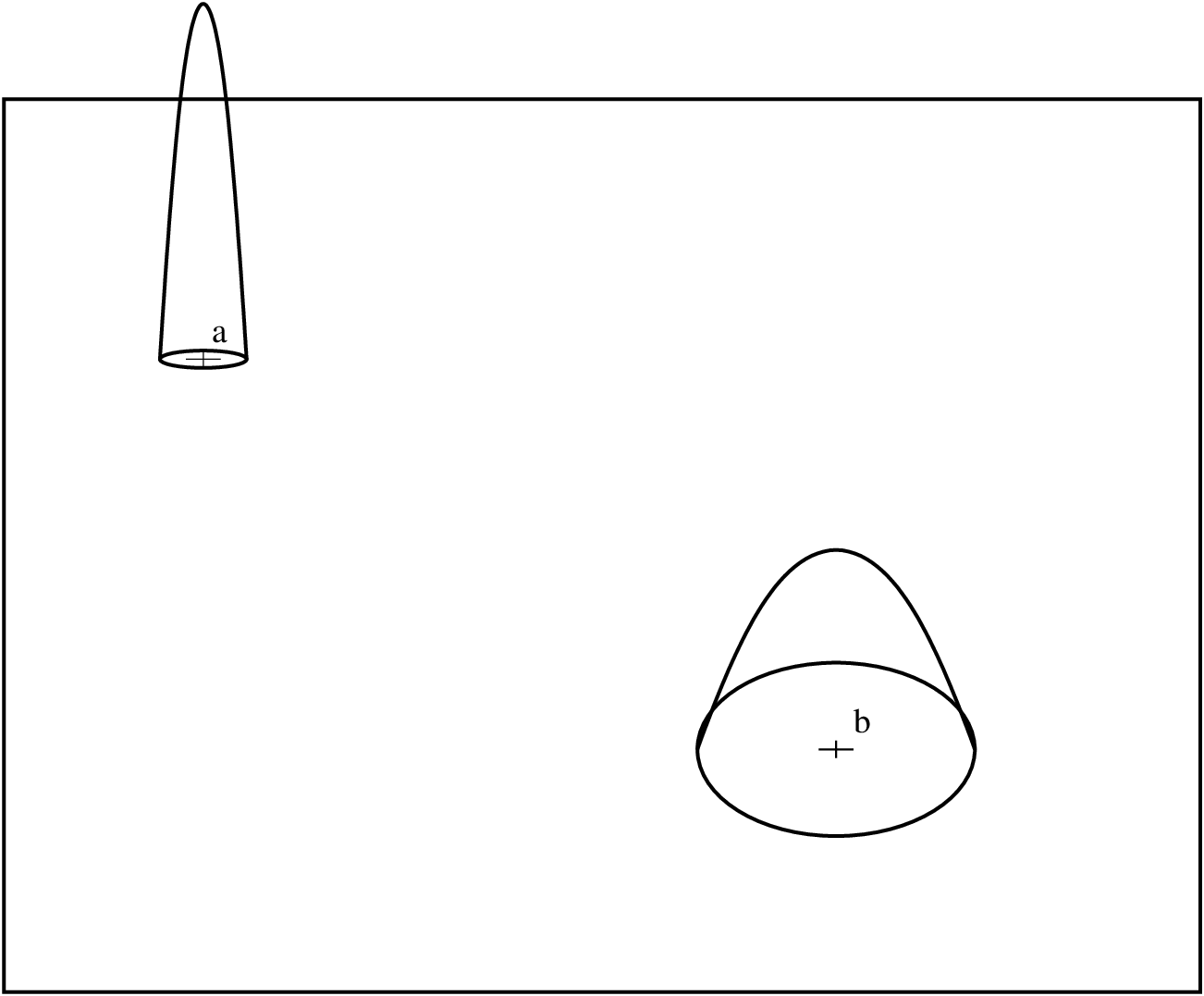}}
\caption{example of a function $f$ with a point of nondifferentiability.}
\label{counterexample}
\end{figure}

\section{Existence of surfaces with prescribed mean curvature}\label{secmean}
In this section we shall assume that $g$ has zero average and satisfies 
\begin{equation}\label{condg}
\int_E g  \le(1-\Lambda) P(E, Q) \qquad \quad \forall E \subset Q
\end{equation}
for some $\Lambda>0$. 
Notice that \eqref{condg} is always satisfied if $\|g\|_{L^d(Q)}$ is small enough, and is precisely the assumption needed in 
\cite{chambthour} (see also \cite{cdll}) to prove existence of planelike minimizers of $F$.
Notice also that, if $g$ satisfies \eqref{condg}, then the inequality in \eqref{condg} holds
for all sets $E\subset\R^d$ of finite perimeter. In particular, this implies the following estimate on 
the function $f$:
\begin{equation}\label{stimaf}
c\,v^\frac{d-1}{d} \le f(v)\le C\, v^\frac{d-1}{d} \qquad {\rm for\ some\ }0<c<C.
\end{equation}

In the sequel we will need a representation result for the functional $F$, due to Bourgain and Brezis \cite{bourgainbrezis}.

\begin{thm}\label{bourbrez}
Let $g$ be a function verifying \eqref{condg} then there exists a periodic and continuous function $\sigma$  with $\max \sigma(x) <1$ satisfying $\dive \sigma=g$. The energy $F$ can thus be written as an anisotropic perimeter:
\[F(E)=\int_{\pE} \left( 1+\sigma(x) \cdot \nu\right).\]
\end{thm}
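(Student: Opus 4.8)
The statement has two independent parts: the \emph{existence} of a continuous periodic field $\sigma$ with $\dive\sigma=g$ and $\|\sigma\|_\infty<1$, and the resulting \emph{representation} of $F$ as an anisotropic perimeter. The plan is to treat them separately, since once such a $\sigma$ is available the representation is a one-line consequence of the Gauss--Green formula. The role of the strict bound $\|\sigma\|_\infty<1$ is to ensure that the integrand $\phi_g(\nu)=1+\sigma(x)\cdot\nu$ is strictly positive, so that $F$ is a genuine (positive, convex in $\nu$) anisotropic perimeter; the role of assumption \eqref{condg} is precisely to produce this strict bound.

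For the existence of a \emph{bounded} solution with the sharp norm I would argue by convex duality on the torus $\R^d/\Z^d$. Minimizing $\|\sigma\|_\infty$ among all fields with $\dive\sigma=g$ equals, by Fenchel--Rockafellar duality (the total variation being the predual norm of the $L^\infty$ constraint on divergence-constrained fields),
\[
\sup\Big\{ \int_{Q} g\,u\,dx \ :\ u\in BV,\ \int_Q |Du|\le 1 \Big\}=\sup_{u}\frac{\int_Q g\,u\,dx}{\int_Q |Du|}.
\]
Using that $g$ has zero average (so $u$ may be normalised by a constant), the coarea formula $\int_Q|Du|=\int_{\R}P(\{u>t\},Q)\,dt$ together with the layer-cake identity $\int_Q g\,u\,dx=\int_{\R}\big(\int_{\{u>t\}}g\big)\,dt$ reduces this supremum to $\sup_E \int_E g\,dx/P(E,Q)$, which by \eqref{condg} is bounded by $1-\Lambda<1$. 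Hence there is $\sigma\in L^\infty$ with $\dive\sigma=g$ and $\|\sigma\|_\infty\le 1-\Lambda$.

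The \textbf{main obstacle} is to upgrade this merely bounded field to a \emph{continuous} one without losing the bound: the duality argument produces an $L^\infty$ field with no modulus of continuity. This is exactly where the H\"older regularity of $g$ and the theorem of Bourgain and Brezis \cite{bourgainbrezis} enter: for $g\in\C^{0,\alpha}(Q)$ with zero mean it yields a continuous (indeed $W^{1,d}$) periodic solution of $\dive\sigma=g$, and reconciling this regularity with the sharp norm bound $1-\Lambda$ coming from \eqref{condg} is the delicate point, and the reason the result is attributed to them. Concretely I would combine the two, applying the Bourgain--Brezis construction to a regularisation of $g$ while controlling the $L^\infty$ norm through the duality estimate above, which is stable thanks to the \emph{strict} inequality in \eqref{condg}; passing to the limit then gives $\sigma$ continuous and periodic with $\max|\sigma|<1$.

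Finally, for the representation, given $\sigma$ continuous with distributional divergence $g\in L^\infty$, the Gauss--Green formula for sets of finite perimeter (valid for bounded divergence-measure fields, cf. the trace theory of \cite{CTZ} already invoked in the proof of \eqref{alexfench}) gives, for every compact $E$,
\[
\int_E g\,dx=\int_E \dive\sigma\,dx=\int_{\pE}\sigma\cdot\nu\,d\hausmoins,
\]
with $\nu$ the outer normal. Hence $F(E)=P(E)-\int_E g\,dx=\int_{\pE}(1-\sigma\cdot\nu)\,d\hausmoins$, which is the asserted identity once $\nu$ is taken with the orientation fixed in the statement; in either case $1\pm\sigma\cdot\nu\ge 1-\|\sigma\|_\infty>0$, so $F$ is a bona fide anisotropic perimeter, as claimed.
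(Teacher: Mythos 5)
First, a point of comparison that matters here: the paper does not actually prove Theorem \ref{bourbrez} — it quotes it as a known result, attributed to Bourgain and Brezis \cite{bourgainbrezis} (and used in exactly this form, under assumption \eqref{condg}, in \cite{chambthour}). So your reconstruction has to stand on its own. Two of its three parts do: the duality computation correctly reduces the sharp $L^\infty$ bound to $\sup_E \int_E g\,dx/P(E,Q)\le 1-\Lambda$ via coarea and layer-cake (using the zero average of $g$), and the final Gauss--Green step for sets of finite perimeter is exactly right; you even correctly noticed that with $\dive\sigma=g$ and the outward normal the identity comes out as $\int_{\pE}(1-\sigma\cdot\nu)\,d\hausmoins$, so that $\sigma$ must be replaced by $-\sigma$ — a harmless sign slip in the statement itself.

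The genuine gap is in the step you yourself flag as the main obstacle: the combination of the $L^\infty$ bound with continuity. As written — ``applying the Bourgain--Brezis construction to a regularisation of $g$ while controlling the $L^\infty$ norm through the duality estimate \ldots passing to the limit'' — the argument does not close. Applying Bourgain--Brezis to $g_\eps=\rho_\eps * g$ produces a continuous field whose sup-norm is controlled only by $C_d\|g_\eps\|_{L^d}\approx C_d\|g\|_{L^d}$; this bound has nothing to do with \eqref{condg} and has no reason to be smaller than $1$. The duality estimate, on the other hand, bounds a \emph{different} field (the optimal $L^\infty$ one, which need not be continuous even for smooth data), and your sketch contains no mechanism transferring its bound to the Bourgain--Brezis field. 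Nor can ``passing to the limit'' repair this: limits of continuous fields in any topology available here (weak-$*$ $L^\infty$, or $L^1$) need not be continuous, and there is no equicontinuity. The correct combination mollifies the \emph{field}, not the data: let $\sigma_1$ be the duality field, with $\dive\sigma_1=g$ and $\|\sigma_1\|_\infty\le 1-\Lambda$, and set $\sigma_1^\eps=\rho_\eps*\sigma_1$, which is continuous, periodic, satisfies $\|\sigma_1^\eps\|_\infty\le 1-\Lambda$ and $\dive \sigma_1^\eps=\rho_\eps * g$. Then apply Bourgain--Brezis only to the remainder $g-\rho_\eps*g$, which has zero average and satisfies $\|g-\rho_\eps*g\|_{L^d(Q)}\le \|g-\rho_\eps*g\|_\infty\le C\eps^\alpha$ by the H\"older continuity of $g$; this yields a continuous periodic $\tau_\eps$ with $\dive\tau_\eps=g-\rho_\eps*g$ and $\|\tau_\eps\|_\infty\le C_d\,\|g-\rho_\eps*g\|_{L^d}$. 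Fixing $\eps$ once and for all so small that $C_d\|g-\rho_\eps*g\|_{L^d}<\Lambda$, the field $\sigma=\sigma_1^\eps+\tau_\eps$ is continuous, periodic, solves $\dive\sigma=g$ and has $\max|\sigma|<1$: no limit passage is needed. (Since $g\in\C^{0,\alpha}$ here, one could even bypass Bourgain--Brezis in this correction step, solving $\Delta u=g-\rho_\eps*g$ and using Schauder estimates together with interpolation of H\"older norms to make $\|\nabla u\|_\infty$ small; Bourgain--Brezis becomes essential only for rougher $g$.)
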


\noindent Theorem \ref{bourbrez} implies that
\begin{equation}\label{stimF}
 \Lambda P(E)\leq F(E)\leq 2 P(E)
\end{equation}
for all sets $E$ of finite perimeter.


The next Lemma gives an upper bound on the number of ``large'' connected components of 
a volume-constrained minimizer.

\begin{lem}\label{lemnum}
Let $g$ be a periodic $\mathcal{C}^{0,\alpha}$ function with zero average and satisfying \eqref{condg}.
Let $E_v$ be a compact minimizer of \eqref{con}, and let $E_i$ be the connected components of $E_v$.
We can order the sets $E_i$ in such a way that $|E_i|$ is decreasing in $i$.
Given $\delta>0$ let 
\[
N_\delta =\left[ 1+\left(\frac{C}{c}\right)^d\frac{1}{\delta^{d}}\right].
\]
Then
\begin{equation}\label{ndelta}
\sum_{i=N_\delta}^\infty |E_i| \le \delta v.
\end{equation}
\end{lem}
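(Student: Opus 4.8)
The plan is to use the power-law estimate \eqref{stimaf} together with the fact that each connected component of a minimizer is \emph{itself} an isoperimetric-type minimizer at its own volume, and exploit the strict subadditivity hidden in the concavity of $v\mapsto v^{(d-1)/d}$. First I would isolate the key structural observation: if $E_v=\bigcup_i E_i$ is the decomposition into connected components, then $F(E_v)=\sum_i F(E_i)$ (the perimeter is additive over disjoint components and the bulk integral $\int g$ splits), and by minimality each component $E_i$ must be a minimizer of $F$ at volume $v_i:=|E_i|$, so $F(E_i)=f(v_i)$. Hence
\[
f(v)=\sum_i f(v_i),\qquad \sum_i v_i=v.
\]

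Next I would feed in the two-sided bound \eqref{stimaf}. The upper bound $f(v)\le C\,v^{(d-1)/d}$ applied to the whole set and the lower bound $f(v_i)\ge c\,v_i^{(d-1)/d}$ applied to each component give
\[
c\sum_i v_i^{\frac{d-1}{d}}\le \sum_i f(v_i)=f(v)\le C\,v^{\frac{d-1}{d}},
\]
so that $\sum_i v_i^{(d-1)/d}\le (C/c)\,v^{(d-1)/d}$. This is the engine of the argument: the concave power forces the volumes to decay. Since the $v_i$ are ordered decreasingly, for any index $N$ the $N$-th largest volume satisfies $N\,v_N^{(d-1)/d}\le\sum_{i\le N}v_i^{(d-1)/d}\le (C/c)v^{(d-1)/d}$, hence $v_N\le (C/c)^{d/(d-1)}\,N^{-d/(d-1)}\,v$. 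The tail mass is then controlled by
\[
\sum_{i\ge N}v_i
=\sum_{i\ge N}v_i^{\frac1d}\,v_i^{\frac{d-1}{d}}
\le v_N^{\frac1d}\sum_{i\ge N}v_i^{\frac{d-1}{d}}
\le v_N^{\frac1d}\,\frac{C}{c}\,v^{\frac{d-1}{d}},
\]
and inserting the bound on $v_N$ and simplifying the exponents yields a tail estimate of the form $\sum_{i\ge N}v_i\le (C/c)^{\beta}\,N^{-1/(d-1)}\,v$ for an explicit $\beta$. Choosing $N$ large enough that this prefactor is $\le\delta$ gives exactly a threshold of the order $N_\delta\sim 1+(C/c)^{d}\delta^{-d}$ announced in the statement, after matching constants.

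I expect the main obstacle to be the first structural step rather than the arithmetic: one must rigorously justify that each connected component $E_i$ is individually a minimizer at volume $v_i$, i.e. the splitting $f(v)=\sum_i f(v_i)$. This is where one argues by contradiction — if some $E_i$ were not optimal at volume $v_i$, replacing it by a genuine minimizer $\widetilde E_i$ of \eqref{con} at volume $v_i$ (translated far away so as not to interact, using compactness from Proposition~\ref{R0} and additivity as in the subadditivity argument of Proposition~\ref{lipschitz}) would strictly lower the total energy while preserving $|E_v|=v$, contradicting minimality. A secondary technical point is controlling the exponents cleanly so that the constant in $N_\delta$ comes out as $(C/c)^d\delta^{-d}$; this is just bookkeeping with the concavity inequality, but one should verify that the crude tail bound, once the prefactor is forced below $\delta$, indeed delivers $\sum_{i\ge N_\delta}|E_i|\le\delta v$ rather than merely $\le\delta^{\prime}v$ for some comparable $\delta'$.
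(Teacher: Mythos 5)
Your argument is correct, and it runs on the same engine as the paper's proof: from \eqref{stimaf} applied to each component and to $E_v$ itself one gets, with $x_i=|E_i|/v$, that $\sum_i x_i=1$ and $\sum_i x_i^{(d-1)/d}\le C/c$, and the tail is then controlled by the splitting $x_i=x_i^{1/d}x_i^{(d-1)/d}$ together with the decreasing ordering. Two deviations are worth recording. First, the step you single out as the main obstacle --- that each component is itself a minimizer at its own volume, giving the equality $f(v)=\sum_i f(v_i)$ --- is not needed and the paper does not rely on it: all that is used is the one-sided inequality $\sum_i f(v_i)\le f(v)$, which is immediate because each $E_i$ is an admissible competitor at volume $v_i$ (so $f(v_i)\le F(E_i)$) and $\sum_i F(E_i)=F(E_v)=f(v)$ by additivity over components. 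Your replacement argument with far-away integer translations does prove the stronger equality (it is essentially the subadditivity construction of Proposition \ref{lipschitz} run backwards), but it is a detour. Second, your tail bookkeeping goes in the opposite direction from the paper's: the paper takes $M$ minimal with $\sum_{i>M}x_i<\delta$, applies the splitting to the tail to get the \emph{lower} bound $x_M\ge (c/C)^d\delta^d$, and concludes $M\le (C/c)^d\delta^{-d}<N_\delta$ from $Mx_M\le 1$; you instead derive the \emph{upper} bound $x_N\le \left((C/c)/N\right)^{d/(d-1)}$ from the head sum and obtain $\sum_{i\ge N}x_i\le (C/c)^{d/(d-1)}N^{-1/(d-1)}$, which is $\le\delta$ as soon as $N\ge (C/c)^{d}\delta^{-(d-1)}$. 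The verification you leave open does close: one may assume $\delta\le 1$ (otherwise \eqref{ndelta} is trivial since $\sum_i|E_i|=v$), and then $N_\delta\ge (C/c)^d\delta^{-d}\ge (C/c)^d\delta^{-(d-1)}$, so by monotonicity of the tail in $N$ your estimate gives exactly \eqref{ndelta} at $N_\delta$; in fact your threshold $(C/c)^d\delta^{-(d-1)}$ is sharper than the $N_\delta$ of the statement, so your route proves slightly more than required.
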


\begin{proof}
Let $x_i = \dfrac{|E_i|}{v}\in [0,1]$. Recalling \eqref{stimaf}, we have
\[
c v^\frac{d-1}{d} \sum_{i=1}^\infty x_i^\frac{d-1}{d}\le \sum_{i=1}^\infty f(|E_i|) = f(v)\le C v^\frac{d-1}{d}, 
\]
hence 
\[
\sum_{i=1}^\infty x_i^\frac{d-1}{d}\le \frac C c
\qquad {\rm and} \qquad \sum_{i=1}^\infty x_i = 1.
\]
Let now $M$ be the smallest integer such that 
\[
\sum_{i=M+1}^\infty x_i < \delta,
\]
we want to prove that $M< N_\delta$. Indeed, we have
\[
\delta\le \sum_{n=M}^\infty x_i = \sum_{n=M}^\infty x_i^\frac{1}{d} x_i^\frac{d-1}{d}
\le x_{M}^\frac{1}{d} \sum_{n=M}^\infty x_i^\frac{d-1}{d}\le \frac C c x_{M}^\frac{1}{d}.
\]
We then obtain
\[
x_M\ge \left(\frac{c}{C}\right)^d\delta^d.
\]
Hence, as 
\[ 
1\ge \sum_{i=1}^M x_i\ge\sum_{i=1}^M x_M=Mx_M,
\] 
by the decreasing property of $x_i$, we get
\[
1\ge M x_M \ge M \left(\frac{c}{C}\right)^d\delta^d,
\]
which gives
\[
M\le \left(\frac{C}{c}\right)^d\frac{1}{\delta^{d}}<N_\delta.
\]
\end{proof}

\subsection{Compact solutions with big volume.}\label{secbig}
{}From \eqref{stimaf}, Proposition \ref{proder} and Remark \ref{rqf},
we immediately obtain the following result.

\begin{prop}\label{progo}
Let $g$ be a periodic $\mathcal{C}^{0,\alpha}$ function of zero average satisfying \eqref{condg}.
Assume that $f'(v)\leq 0$ for some $v>0$. Then there exists $w>0$ such that $f'(w)=0$, therefore 
problem \eqref{kappagi}
admits a compact solution. 
\end{prop}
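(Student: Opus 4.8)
The plan is to locate a volume $w$ at which the isovolumetric function has a horizontal tangent, and then read off a solution of \eqref{kappagi} from the associated Euler--Lagrange equation. The two facts that drive the argument are coercivity of $f$, coming from \eqref{stimaf}, and the one-sided regularity of $f$ from Proposition \ref{proder}. Concretely, the lower bound $f(v)\ge c\,v^{(d-1)/d}$ forces $f(v)\to+\infty$ as $v\to+\infty$, so $f$ is coercive on $[0,+\infty)$; and Proposition \ref{proder} gives, at every $u>0$, the one-sided derivatives $f'_+(u)=\lambda_u^{\min}\le\lambda_u^{\max}=f'_-(u)$, i.e. $f'$ can only jump \emph{downward}.

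The crucial consequence of this reversed inequality is that $f$ is differentiable, with vanishing derivative, at any interior local minimum. Indeed, if $w$ is such a minimum then minimality gives $f'_-(w)\le 0\le f'_+(w)$, while Proposition \ref{proder} gives $f'_+(w)\le f'_-(w)$; the two squeeze together to yield $f'_+(w)=f'_-(w)=0$, so $f'(w)=0$. This is exactly the observation underlying Remark \ref{rqf}.

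Now to produce such a $w$. Let $v_0$ be the volume furnished by the hypothesis, so that $f$ is differentiable at $v_0$ with $f'(v_0)\le 0$. By coercivity and continuity of $f$, the minimum of $f$ over $[v_0,+\infty)$ is attained at some $w\ge v_0$. If $w>v_0$, then $w$ is an interior local minimum and the squeeze of the previous paragraph gives $f'(w)=0$. If instead $w=v_0$, then $f(v)\ge f(v_0)$ for all $v\ge v_0$, so $f'_+(v_0)\ge 0$; since $f$ is differentiable at $v_0$ this means $f'(v_0)\ge 0$, and combined with the hypothesis $f'(v_0)\le 0$ we again get $f'(w)=f'(v_0)=0$. (In the language of Remark \ref{rqf}, the alternative ``$f$ strictly increasing'' is ruled out by $f'(v_0)\le 0$, while the alternative ``$f$ eventually decreasing'' is ruled out by coercivity.) In either case we have found $w>0$ with $f'(w)=0$.

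Finally, Proposition \ref{lipschitz} identifies $f'(w)$ with the Lagrange multiplier $\lambda_w$ of a minimizer $E_w$ of \eqref{con}, so $\lambda_w=0$ and the Euler--Lagrange equation becomes $\kappa_{E_w}=g$; since $E_w$ is compact with $\mathcal{C}^{2,\alpha}$ boundary by Theorem \ref{thexist}, this is the desired compact solution of \eqref{kappagi}. The only genuinely delicate point is the differentiability at the local minimum, and this is precisely where the reversed inequality $f'_+\le f'_-$ of Proposition \ref{proder} is indispensable: without it a local minimum could be a convex (V-shaped) kink with $f'_-(w)<0<f'_+(w)$ and no horizontal tangent, and the argument would collapse.
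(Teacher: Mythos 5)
Your proof is correct and follows essentially the same route as the paper, which deduces the result ``immediately'' from the coercivity bound \eqref{stimaf}, the one-sided derivative formula \eqref{eqder} of Proposition \ref{proder}, and the observation of Remark \ref{rqf} that $f$ is differentiable (with vanishing derivative) at any local minimum. You have simply spelled out the same three ingredients in full detail, including the key squeeze $f'_-(w)\le 0\le f'_+(w)\le f'_-(w)$ at the minimizing volume.
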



\begin{thm}\label{thmain}
 Let $g$ be a periodic $\mathcal{C}^{0,\alpha}$ function with zero average and satisfying \eqref{condg}.
There exist $v_n\to +\infty$ and compact minimizers $E_n$ of \eqref{con} such that $|E_n|=v_n$ and 
$E_n$ solves
 \[
 \kappa=g+\lambda_n
 \]
 with $\lambda_n\ge 0$ and $\lambda_n\to 0$ as $n\to +\infty$.
\end{thm}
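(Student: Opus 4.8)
The plan is to read off the sequence from the global behaviour of the isovolumetric function $f$, exploiting the two-sided power bound \eqref{stimaf}. The lower bound $f(v)\ge c\,v^{(d-1)/d}$ guarantees $f(v)\to+\infty$, while the upper bound $f(v)\le C\,v^{(d-1)/d}$ says that $f$ grows \emph{strictly sublinearly}; this sublinearity is what will force the Lagrange multipliers to become small along a subsequence. Throughout I will use that $f$ is locally Lipschitz (Proposition \ref{lipschitz}), hence absolutely continuous with $f'=\lambda_v$ a.e.; that its one-sided derivatives exist everywhere and equal the extreme multipliers $\lambda_v^{\min}\le\lambda_v^{\max}$, each realized by a genuine minimizer (Proposition \ref{proder}); and that $f$ is differentiable with vanishing derivative at every local minimum (Remark \ref{rqf}). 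Existence and regularity of compact minimizers $E_n$ of \eqref{con} at the chosen volumes is provided by Theorem \ref{thexist}.

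Since $f(v)\to+\infty$, for every $T>0$ the restriction $f|_{[T,+\infty)}$ attains its minimum; fix a minimizer $w_T\in[T,+\infty)$. I split into two cases according to whether these minima are eventually interior. In the first case there is a sequence $T_n\to+\infty$ with $w_{T_n}>T_n$. Then each $w_{T_n}$ lies in the interior of $[T_n,+\infty)$ and minimizes $f$ there, hence is a local minimum of $f$ on $\R$; by Remark \ref{rqf}, $f$ is differentiable at $w_{T_n}$ with $f'(w_{T_n})=0$, so the associated multiplier vanishes. Setting $v_n=w_{T_n}\to+\infty$ and taking $E_n$ a compact minimizer at volume $v_n$ (Theorem \ref{thexist}), we get $\kappa=g+\lambda_n$ with $\lambda_n=0$, which is even stronger than required.

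In the remaining case there is $T_0$ such that for every $T\ge T_0$ the minimum of $f$ on $[T,+\infty)$ is attained only at the endpoint $T$; this forces $f(T_1)\le f(T_2)$ whenever $T_0\le T_1\le T_2$, i.e. $f$ is non-decreasing on $[T_0,+\infty)$, whence $\lambda_v^{\min}\ge 0$ there. I claim $\liminf_{v\to+\infty}\lambda_v^{\min}=0$. Indeed, if $\lambda_v^{\min}\ge\delta>0$ for all $v\ge V$, then, since $f$ is absolutely continuous with $f'=\lambda_v^{\min}$ a.e., integration gives $f(v)\ge f(V)+\delta(v-V)$, contradicting the sublinear bound $f(v)\le C\,v^{(d-1)/d}$ of \eqref{stimaf}. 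Hence I may choose $v_n\to+\infty$ with $0\le\lambda_{v_n}^{\min}\to 0$, and take $E_n$ a compact minimizer at volume $v_n$ realizing the multiplier $\lambda_n=\lambda_{v_n}^{\min}$ (Proposition \ref{proder}); then $\kappa=g+\lambda_n$ with $\lambda_n\ge 0$ and $\lambda_n\to 0$.

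The only genuinely delicate point is guaranteeing the sign $\lambda_n\ge 0$ \emph{simultaneously} with $\lambda_n\to 0$: the small multipliers produced by sublinear growth alone could a priori be negative. The dichotomy above is precisely what resolves this — a negative right derivative far out would create a local minimum of $f$ at an arbitrarily large volume (first case), where the multiplier is exactly $0$, whereas if no such interior minima occur then $f$ is eventually monotone and the multipliers are automatically nonnegative (second case). I expect the main obstacle to be exactly this bookkeeping of signs through the one-sided derivatives of Proposition \ref{proder} and the local-minimum regularity of Remark \ref{rqf}; the quantitative heart is the elementary but decisive conversion of the sublinear bound \eqref{stimaf} into the impossibility of a uniformly positive right derivative.
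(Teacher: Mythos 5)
Your proof is correct and takes essentially the same route as the paper: the identical dichotomy between local minima of $f$ at arbitrarily large volumes (where Remark \ref{rqf} forces the multiplier to vanish) and eventual monotonicity of $f$ (where absolute continuity of the locally Lipschitz $f$ together with the sublinear upper bound in \eqref{stimaf} forces nonnegative multipliers tending to zero along a sequence). Your packaging of the dichotomy via endpoint-versus-interior minimizers of $f|_{[T,+\infty)}$, and your use of $\lambda_v^{\min}$ from Proposition \ref{proder} rather than $f'$ at points of differentiability, are just slightly more careful renderings of the paper's Case 1 / Case 2 split based on the sign of $f'$.
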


\begin{proof}
Two situations can occur:

\noindent {\it Case 1.} There exists a sequence $\tilde v_n\to +\infty$ such that $f'(\tilde v_n)\le 0$.
Recalling \eqref{stimaf} we have $f(v)\geq c v^{\frac{d-1}{d}}$, which implies that we can  
find $v_n\ge \tilde v_n$ such that $f$ has a local minimum in $v_n$, hence $\lambda_v=f'(v_n)=0$.

\noindent {\it Case 2.} There exists $v_0>0$ such that $f'(v)>0$ for every $v\ge v_0$. By \eqref{stimaf} we have
$f(v)\leq C v^{\frac{d-1}{d}}$, and
\[
f(v)=f(v_0)+\int_{v_0}^v f'(s) \, ds.
\]
It follows that there exists a sequence $v_n\to +\infty$ such that 
\[\lim_{n\rightarrow +\infty} f'(v_n) = 0.\]
\end{proof}

\begin{cor}\label{cormain}
 Let $g$ be a periodic $\mathcal{C}^{0,\alpha}$ function with zero average and satisfying \eqref{condg}.
 Then for every $\eps>0$ there exists $\eps'\in [0,\eps]$ such that there exists a compact solution of 
 \[
 \kappa=g+\eps'.
 \]
\end{cor}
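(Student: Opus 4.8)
The plan is to derive Corollary \ref{cormain} directly from Theorem \ref{thmain}. The theorem already produces a sequence $v_n\to+\infty$ together with compact minimizers $E_n$ of \eqref{con} satisfying $|E_n|=v_n$ and solving $\kappa=g+\lambda_n$, where $\lambda_n\ge 0$ and $\lambda_n\to 0$. The corollary merely asks, for an arbitrary prescribed $\eps>0$, for a single compact solution of $\kappa=g+\eps'$ with $\eps'\in[0,\eps]$. So the entire content is a selection argument: choose $n$ large enough that $\lambda_n$ falls inside the target window.

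The steps, in order, are as follows. First I fix $\eps>0$. Second, since $\lambda_n\to 0$ as $n\to+\infty$ and each $\lambda_n\ge 0$, I invoke the definition of convergence: there exists an index $N$ such that $0\le\lambda_N<\eps$, in particular $\lambda_N\in[0,\eps]$. Third, I set $\eps':=\lambda_N$, which lies in $[0,\eps]$ as required. Fourth, I take the corresponding minimizer $E_N$ furnished by Theorem \ref{thmain}; it is compact, and by the same theorem its boundary satisfies the Euler--Lagrange equation $\kappa=g+\lambda_N=g+\eps'$. Since $E_N$ is a compact minimizer of \eqref{con}, its boundary is of class $\mathcal{C}^{2,\alpha}$ by Theorem \ref{thexist}, so the equation $\kappa=g+\eps'$ holds in the classical sense on $\partial E_N$. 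This exhibits the desired compact solution and completes the argument.

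I expect no substantial obstacle here, because all the analytic work—existence of smooth compact minimizers, the volume-constraint/Lagrange-multiplier correspondence, the regularity of $\partial E_v$, and above all the production of a sequence with $\lambda_n\searrow 0$—has already been carried out in Theorem \ref{thexist} and Theorem \ref{thmain}. The only point requiring even a moment's care is that the convergence $\lambda_n\to 0$ combined with the sign condition $\lambda_n\ge 0$ guarantees that the values $\lambda_n$ eventually enter every right-neighborhood $[0,\eps)$ of the origin, which is exactly what licenses the choice $\eps'=\lambda_N\in[0,\eps]$. Thus the corollary is essentially a restatement of Theorem \ref{thmain} specialized to a prescribed tolerance $\eps$, and the proof is a short extraction from the tail of the sequence $(\lambda_n)$.
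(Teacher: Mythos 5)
Your proposal is correct and coincides with the paper's own (implicit) argument: the corollary is stated without proof precisely because it follows from Theorem \ref{thmain} by the selection you describe, namely choosing $n$ large enough that $\lambda_n\in[0,\eps]$ and setting $\eps'=\lambda_n$. The regularity point you add (that $\partial E_N$ is $\mathcal{C}^{2,\alpha}$ by Theorem \ref{thexist}, so the equation holds classically) is exactly the justification the paper takes for granted.
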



Notice that for a general function $g$ we cannot let $\eps'=0$ in Corollary \ref{cormain}. 
Indeed, as shown in \cite{BCN}, there are no 
compact solutions to \eqref{kappagi} for periodic functions $g$, of zero average, which are translation invariant 
in some direction and of sufficiently small lipschitz norm.

We expect that condition \eqref{condg} is not necessary for the thesis of Corollary \ref{cormain} to hold,
as suggested by the following result:

\begin{thm}
Let $g$ be a periodic $\mathcal{C}^{0,\alpha}$ function with zero average and such that 
$g|_{\partial Q}=0$.
Then for every $\eps >0$ there exists a compact solution of $$\kappa=g+\eps.$$
\end{thm}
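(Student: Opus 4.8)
The plan is to realize the desired hypersurface as the boundary of a compact volume-constrained minimizer whose Lagrange multiplier is exactly $\eps$. By Theorem \ref{thexist} and the remark following it, every minimizer $E_v$ of \eqref{con} is compact, has boundary of class $\C^{2,\alpha}$, and satisfies the Euler--Lagrange equation $\kappa_{\partial E_v}=g+\lambda_v$; hence it suffices to exhibit a volume $v$ for which some minimizer has $\lambda_v=\eps$. I would search for such a $v$ among the maximizers of the auxiliary function $\psi(v):=f(v)-\eps v$ on $[0,+\infty)$.

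First I would check that $\psi$ attains a strictly positive maximum at some interior point $v^*>0$. Using only that $g$ has zero average (so that $\bigl|\int_{B}g\bigr|=O(P(B))$ on balls, by a boundary-layer estimate), one gets $f(v)\le C\,v^{(d-1)/d}$ exactly as in \eqref{stimaf}, whence $\psi(v)\to-\infty$; on the other hand the isoperimetric inequality gives $f(v)\ge c_d v^{(d-1)/d}-\ginf\,v$, so that $\psi(v)>0$ for $v$ small. Thus a maximizer $v^*\in(0,+\infty)$ exists. Combining the maximality of $v^*$ with the one-sided derivative formula of Proposition \ref{proder}, the left and right derivatives of $\psi$ at $v^*$ are $\ge 0$ and $\le 0$ respectively, which translates into
\[
\lambda_{v^*}^{\min}\ \le\ \eps\ \le\ \lambda_{v^*}^{\max}.
\]
In particular, if $f$ is differentiable at $v^*$ then $\lambda_{v^*}^{\min}=\lambda_{v^*}^{\max}=\eps$ and the corresponding minimizer $E_{v^*}$ already solves $\kappa=g+\eps$. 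I emphasize that this step uses nothing beyond the zero average of $g$.

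The only remaining case is a downward corner of $f$ at $v^*$ with $\lambda_{v^*}^{\min}<\eps<\lambda_{v^*}^{\max}$: then no global minimizer of volume $v^*$ realizes the multiplier $\eps$, and this is precisely where the hypothesis $g|_{\partial Q}=0$ enters. The idea is to abandon global minimizers and instead produce a continuous one-parameter family of compact \emph{local} minimizers along which the multiplier depends continuously and sweeps across $\eps$, so that the intermediate value theorem yields a compact critical point with multiplier exactly $\eps$. This is modeled on the two-bump example of Section \ref{secprop}: there the branch of minimizers localized near a single bump has a multiplier that is continuous in $v$ and already covers all of $(0,+\infty)$, even though the global minimizer jumps. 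Concretely, since $g$ vanishes on the whole grid $\partial Q+\Z^d$, in a neighborhood of a face of the periodicity lattice the equation $\kappa=g+\eps$ reduces to leading order to $\kappa=\eps$, i.e. to a sphere of radius $(d-1)/\eps$; I would graft onto a slight modification of the minimizer carrying $\lambda_{v^*}^{\max}$ a nearly spherical cap of curvature close to $\eps$, anchored where $g=0$, and use the size of this cap as the continuous parameter, controlling the resulting multiplier through the compactness and convergence statements of Lemma \ref{conv}. Heuristically, for small $\eps$ the large volume $v^*$ is governed by the asymptotic Wulff-shape regime of Section \ref{secbig}, where $f$ is essentially smooth and the easy differentiable case applies, so that a genuine corner can only occur for $\eps$ bounded away from $0$, i.e. at moderate volumes, exactly the range in which a curvature-$\eps$ sphere is small enough to be accommodated near the grid $\{g=0\}$.

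The main obstacle is this last step: turning the heuristic ``grow a curvature-$\eps$ spherical cap in the region where $g=0$'' into a bona fide family of compact sets that are critical for $F-\eps|\cdot|$ and along which the multiplier varies continuously and crosses $\eps$. The delicate points are that the gluing must preserve a $\C^{2}$ boundary with curvature $g+\eps$, that $g$ does not vanish identically off the grid, so the cap is only an approximate sphere and its multiplier must be estimated rather than read off, and that the constructed sets must be shown to be actual solutions (for instance stable local minimizers) rather than mere competitors. It is here that the full strength of $g|_{\partial Q}=0$, as opposed to the weaker condition \eqref{condg}, is genuinely needed.
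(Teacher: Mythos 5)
Your first step is sound and is a nice observation: maximizing $\psi(v)=f(v)-\eps v$ (the growth bounds you cite do hold with zero average alone) and applying Proposition \ref{proder} at a maximizer $v^*$ gives $\lambda_{v^*}^{\min}\le\eps\le\lambda_{v^*}^{\max}$, and if $f$ is differentiable at $v^*$ you are done. But the corner case is a genuine gap, not a loose end, and your proposed resolution is a heuristic rather than a proof. Concretely: the set of Lagrange multipliers attained by minimizers of volume $v^*$ need not be an interval, so when $\lambda_{v^*}^{\min}<\eps<\lambda_{v^*}^{\max}$ no minimizer at $v^*$ need realize $\eps$; and nothing prevents the maximizer of $f-\eps v$ from sitting at a corner for \emph{every} $\eps$ in a whole interval (think of a concave piecewise linear $f$: the maximizer of $f-\eps v$ is at a kink for all but countably many $\eps$). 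Your claim that a corner ``can only occur for $\eps$ bounded away from $0$'' has no justification and, if anything, points the wrong way. The grafting construction meant to handle this case (attaching a curvature-$\eps$ cap where $g=0$ and following a continuous branch of critical points whose multiplier sweeps across $\eps$) is precisely the hard analytic content: existence of the branch, its compactness, continuity of the multiplier along it, and criticality of the glued sets are all asserted, not established. You acknowledge this yourself, which means the proof is incomplete exactly where the hypothesis $g|_{\partial Q}=0$ is supposed to act.

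For comparison, the paper's proof bypasses $f$, its corners, and all volume constraints. For $N\in\mathbb N$ minimize the \emph{unconstrained} functional $E\mapsto P(E)-\int_E(g+\eps)\,dx$ among $E\subset Q_N$; a minimizer $E_N$ exists by the direct method. Since $g$ is periodic and vanishes on $\partial Q$, it vanishes on $\partial Q_N$, so near the faces of $Q_N$ the prescribed curvature is $\eps>0$ while the obstacle $\partial Q_N$ is flat; the strong maximum principle then forces $E_N$ to lie in the interior of $Q_N$, so the constraint is inactive and either $E_N=\emptyset$ or $\partial E_N$ is a compact $\C^{2,\alpha}$ solution of $\kappa=g+\eps$. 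Finally, $E_N\ne\emptyset$ because $Q_N$ itself is a competitor with energy $P(Q_N)-\eps N^d=N^{d-1}\left(2d-\eps N\right)<0$ for $N$ large, using that $\int_{Q_N}g\,dx=0$ by the zero average. This yields the multiplier exactly $\eps$ in a few lines; the role of $g|_{\partial Q}=0$ is simply to place the artificial boundary $\partial Q_N$ in the region where the prescribed curvature is strictly positive, so that it cannot be touched.
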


\begin{proof}
Fix $\eps>0$. For $N\in\mathbb N$ we let $E_N$ be a minimizer of the problem
\[
\min_{E\subset Q_N} P(E)-\int_E(g(x)+\eps)\,dx.
\]

Since $g|_{\partial Q}=0$, by strong maximum principle, 
$E_N$ is contained in the interior of $Q_N$ and either $E_N=\emptyset$ or 
$\partial E_N$ is a $\mathcal C^{2,\alpha}$ solution of $\kappa=g+\eps$.

However, from the inequality  
\[
P(E_N) - \int_{E_N} (g(x)+\eps)\,dx
\le P(Q_N) - \eps N^d +  = N^{d-1}\left(2^d-\eps N\right)<0
\]
which holds for all $N>2^d/\eps$, it follows $E_N\ne \emptyset$.
\end{proof}

\subsection{Asymptotic behavior of minimizers.}
For $\eps>0$ and $E\subset\R^d$ of finite perimeter, we let 
\[
F_\eps(E) = \eps^{(d-1)}F\left(\eps^{-1}E\right)
= P(E) - \frac{1}{\eps}\int_{E}g\left( \frac{x}{\eps}\right)\,dx.
\]  
Notice that, given a minimizer $E_v$ of \eqref{con}, 
the set $\eps E_v$ is a volume-constrained minimizer of $F_\eps$.
We recall from \cite[Theorem 2]{chambthour} the following result.

\begin{thm}\label{thconv}
Let $g$ be a periodic $\mathcal{C}^{0,\alpha}$ function with zero average and satisfying \eqref{condg}.
Then there exists a convex positively one-homogeneous function $\phi_g:\R^d\to [0,+\infty)$, 
with $\phi_g(x)>0$ for all $x\ne 0$, such that the functionals $F_\eps$ $\Gamma$-converge,
with respect to the $L^1$-convergence of the characteristic functions, to the anisotropic functional
\[
F_0(E)= \int_{\partial^*E}\phi_g(\nu)\,d\mathcal H^{d-1}
\qquad E\subset\R^d\ {\rm of\ finite\ perimeter.}
\]
\end{thm}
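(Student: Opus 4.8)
The plan is to use the Bourgain--Brezis representation of Theorem \ref{bourbrez} to recast $F_\eps$ as a purely geometric anisotropic perimeter, and then run the standard machinery of periodic homogenization of interfacial energies. Writing $g=\dive\sigma$ with $\sigma$ periodic, continuous and of sup-norm strictly less than $1$, one has
\[
F_\eps(E)=\int_{\pE}\psi\!\left(\tfrac x\eps,\nu\right)\,d\hausmoins,
\qquad \psi(y,p):=|p|+\sigma(y)\cdot p,
\]
where, extended one-homogeneously in $p$, the integrand $\psi$ is convex in $p$ (being the sum of a norm and a linear form) and $1$-periodic in $y$. Moreover the two-sided bound \eqref{stimF} is equivalent to the pointwise coercivity $\Lambda|p|\le\psi(y,p)\le 2|p|$ for all $y,p$. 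Thus $F_\eps$ is the rescaling of a fixed periodic, convex, coercive surface energy, which is precisely the setting in which $\Gamma$-convergence to an effective anisotropic perimeter is available.

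First I would define the candidate density by the asymptotic cell formula
\[
\phi_g(\nu)=\lim_{T\to+\infty}\frac{1}{T^{d-1}}\inf\Big\{\int_{\partial^* A\cap Q_T^\nu}\psi(x,\nu_A)\,d\hausmoins\ :\ A\,\Delta\,H_\nu\subset\subset Q_T^\nu\Big\},
\]
where $H_\nu=\{x\cdot\nu<0\}$ and $Q_T^\nu$ is a cube of side $T$ with a pair of faces orthogonal to $\nu$. Existence of the limit rests on an almost-subadditivity property of the infimum, obtained by juxtaposing near-optimal competitors on adjacent cubes and paying only a boundary-layer error of lower order; this is exactly the plane-like minimizer construction of \cite{cdll,chambthour}, which under assumption \eqref{condg} yields minimizers whose boundary remains confined to a bounded strip. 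Convexity and positive one-homogeneity of $\phi_g$ are inherited from $\psi$ through this formula, while positivity for $\nu\ne 0$ follows from the coercivity above, which forces $c\,|\nu|\le\phi_g(\nu)\le 2|\nu|$ for some $c>0$.

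For the liminf inequality I would use the blow-up method: given $E_\eps\to E$ in $L^1$ with equibounded energies, the measures $A\mapsto\int_{\partial^* E_\eps\cap A}\psi(x/\eps,\nu_{E_\eps})\,d\hausmoins$ are bounded, and after localizing at $\hausmoins$-a.e.\ point of $\pE$ and rescaling, the blown-up configurations become admissible competitors for the cell problem defining $\phi_g(\nu)$, yielding $\liminf_\eps F_\eps(E_\eps)\ge F_0(E)$. For the limsup inequality I would construct a recovery sequence first when $E$ is a half-space, tiling $\pE$ with translated copies of the near-optimal profiles extracted from the cell formula; then for polyhedral $E$ by gluing such constructions face by face; and finally for general finite-perimeter $E$ by density of polyhedra together with the continuity of $F_0$ in the $L^1$-topology.

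The main obstacle is the limsup step on a polyhedron: one must glue the periodic near-optimal profiles built on distinct faces without creating energy concentration along the edges and vertices, so that the total interface error remains $o(\eps^{-(d-1)})$ after rescaling. Controlling this gluing, together with establishing the almost-subadditivity that makes the cell formula converge, is the technical heart of the argument; here the uniform strip confinement of plane-like minimizers guaranteed by \eqref{condg} is what renders the construction quantitative.
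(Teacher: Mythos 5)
First, be aware that the paper does not prove Theorem \ref{thconv} at all: it is recalled verbatim from \cite[Theorem 2]{chambthour} (the sentence preceding the statement says exactly this), so the only meaningful comparison is with the proof given in that reference. Your outline is the classical ``general'' route for homogenization of periodic interfacial energies: after the (correct) rewriting $F_\eps(E)=\int_{\pE}\psi(x/\eps,\nu)\,d\hausmoins$ with $\psi(y,p)=|p|+\sigma(y)\cdot p$ via Theorem \ref{bourbrez}, you define $\phi_g$ by an asymptotic cell formula on large cubes, prove convergence of that formula by almost-subadditivity, obtain the liminf inequality by blow-up, and the limsup by half-space/polyhedral/density recovery constructions. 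This would work, and it is the approach that survives for integrands that are not convex in the normal; but it is substantially heavier than what Chambolle and Thouroude actually do. Their key observation is that, by the coarea formula and the vector field $\sigma$, the set functional is the restriction to characteristic functions of a \emph{convex} functional on $BV$, namely $u\mapsto\int|Du|+\int\sigma\cdot Du$; consequently $\phi_g$ is obtained from a single convex cell problem posed on the periodicity cell (minimization over $u$ such that $u-p\cdot x$ is periodic), whose minimizers (correctors) also furnish the plane-like minimizers and the recovery sequences. Convexity thus replaces exactly the two steps you yourself single out as the technical heart --- the almost-subadditivity needed for the asymptotic formula to have a limit, and the edge/vertex gluing in the polyhedral limsup --- and in addition yields a computable one-cell characterization of $\phi_g$, which your asymptotic formula does not provide. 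Two loose ends in your sketch, neither fatal: (i) the pointwise bound $c|p|\le\psi(y,p)\le 2|p|$ is \emph{not} equivalent to the integral bound \eqref{stimF}, since in the integral the contributions of opposite normals may cancel; it should instead be deduced directly from $\|\sigma\|_\infty<1$ in Theorem \ref{bourbrez}. (ii) The subadditivity of the cell infimum is a standard juxtaposition argument and is not literally ``the plane-like minimizer construction'' of \cite{cdll,chambthour}; what \eqref{condg} and the strip confinement buy you is quantitative control of the boundary-layer errors, not the subadditivity itself.
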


We remark that, with a minor modification of the proof, the result of Theorem \ref{thconv}
also holds if we restrict the functionals $F_\eps$ and $F_0$ to set of prescribed volume.
In particular, by a general property of $\Gamma$-converging sequences \cite{dalmaso},
we have the following consequence of Theorem \ref{thconv}.

\begin{cor}\label{corbel}
Let $\E_\eps$ be minimizers of $F_\eps$ with volume constraint $|\E_\eps|=v$, 
then
\begin{equation}\label{eqineq}
\limsup_{\eps\to 0} F_\eps(\E_\eps)\le \min_{|\E|=v}F_0(\E).
\end{equation}
Moreover, if $|\E_\eps\Delta \E|\to 0$ for some $\E\subset\R^d$, as $\eps \to 0$, 
then $|\E|=v$ and $\E$ is a volume-constrained minimizer of $F_0$.
More generally, if $\E_\eps \to \E$ in the $L^1_{\rm loc}$ topology, then $\E$ is a minimizer of $F_0$ 
with volume constraint $|\E|\le v$. 
\end{cor}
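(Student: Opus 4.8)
The plan is to prove Corollary \ref{corbel} as a direct application of the $\Gamma$-convergence established in Theorem \ref{thconv}, restricted to sets of prescribed volume as indicated in the remark preceding the statement. The three assertions correspond to the three standard consequences of $\Gamma$-convergence: the limsup inequality comes from the existence of a recovery sequence, the identification of the limit as a minimizer comes from the liminf inequality combined with the convergence of energies, and the $L^1_{\mathrm{loc}}$ statement requires an additional lower-semicontinuity argument to control the possible loss of volume in the limit.

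First I would prove \eqref{eqineq}. Let $\E$ be any set with $|\E|=v$. By the $\Gamma$-convergence of $F_\eps$ to $F_0$ restricted to the volume constraint, there exists a recovery sequence $\F_\eps$ with $|\F_\eps|=v$, $\F_\eps\to\E$ in $L^1$, and $\limsup_{\eps\to 0}F_\eps(\F_\eps)\le F_0(\E)$. Since $\E_\eps$ is a volume-constrained minimizer of $F_\eps$ with $|\E_\eps|=v$, we have $F_\eps(\E_\eps)\le F_\eps(\F_\eps)$, and taking the limsup and then the infimum over $\E$ yields \eqref{eqineq}. Next, if $|\E_\eps\Delta\E|\to 0$, then in particular $|\E|=\lim_\eps|\E_\eps|=v$, so $\E$ is an admissible competitor; by the liminf inequality of $\Gamma$-convergence, $F_0(\E)\le\liminf_\eps F_\eps(\E_\eps)$. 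Combining this with \eqref{eqineq} gives $F_0(\E)\le\min_{|\E'|=v}F_0(\E')$, so $\E$ is a volume-constrained minimizer.

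For the final, more general statement, suppose only that $\E_\eps\to\E$ in $L^1_{\mathrm{loc}}$. Here the volume may drop in the limit because mass can escape to infinity, so I would first argue that $|\E|\le v$ using the local convergence together with the lower semicontinuity of the volume under $L^1_{\mathrm{loc}}$ convergence (Fatou applied to the characteristic functions). The key point is then that $F_0$ is still lower semicontinuous under $L^1_{\mathrm{loc}}$ convergence, being an anisotropic perimeter with $\phi_g$ bounded below by a positive constant (as $\phi_g(x)>0$ for $x\ne 0$ and $\phi_g$ is one-homogeneous, one has $\phi_g(\nu)\ge c|\nu|$); thus $F_0(\E)\le\liminf_\eps F_\eps(\E_\eps)\le\min_{|\E'|=v}F_0(\E')$. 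To conclude that $\E$ is a minimizer with constraint $|\E|\le v$, I would compare against the minimum value of $F_0$ over sets of volume $|\E|$ and use that the minimal anisotropic-perimeter value is monotone and scales like $v'^{(d-1)/d}$, so that decreasing the volume cannot increase the energy below what $\E$ already achieves.

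The main obstacle I anticipate is the last step: the liminf inequality of $\Gamma$-convergence in Theorem \ref{thconv} is stated for $L^1$ (equivalently $L^1_{\mathrm{loc}}$ on bounded sets) convergence, and one must verify it survives under merely local convergence when mass can be lost at infinity. The resolution is that lower semicontinuity of the anisotropic perimeter is a local property and therefore compatible with $L^1_{\mathrm{loc}}$ convergence, so no mass created at infinity can decrease $F_0(\E)$; the genuine subtlety is only in correctly relating the constrained minimum at volume $|\E|$ to the constrained minimum at volume $v$, which follows from the scaling of $F_0$ under dilations.
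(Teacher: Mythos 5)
Your handling of the first two assertions is correct and is essentially the argument the paper has in mind (the paper itself just invokes the standard theory of $\Gamma$-convergence, citing Dal Maso, together with the remark that Theorem \ref{thconv} survives the volume constraint): a recovery sequence with the right volume plus minimality of $\E_\eps$ gives \eqref{eqineq}, and under global $L^1$ convergence the volume passes to the limit and the liminf inequality identifies $\E$ as a constrained minimizer.

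The last assertion is where there is a genuine gap. What that statement must deliver --- and what is actually used later, in the proof of Theorem \ref{thasym}, where it yields $E_i=\rho_i W_g$ --- is that the $L^1_{\rm loc}$ limit $\E$ minimizes $F_0$ among sets of \emph{its own} volume $|\E|$, which may be strictly smaller than $v$ because mass escapes to infinity. What you prove is only $|\E|\le v$ together with $F_0(\E)\le \min_{|\F|=v}F_0(\F)$, and this is strictly weaker: writing $m(w)=\min_{|\F|=w}F_0(\F)$, scaling gives $m(w)=\left(w/|W_g|\right)^{\frac{d-1}{d}}F_0(W_g)$, which is \emph{increasing} in $w$, so $m(|\E|)\le m(v)$ and your bound $F_0(\E)\le m(v)$ says nothing about the required inequality $F_0(\E)\le m(|\E|)$. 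Your closing remark that ``decreasing the volume cannot increase the energy below what $\E$ already achieves'' inverts exactly this monotonicity: the constrained minimum at the smaller volume is the smaller number, so it cannot be reached from your inequality. To prove the actual statement you must re-use the minimality of $\E_\eps$, not just lower semicontinuity: if $F_0(\F)<F_0(\E)$ for some $\F$ with $|\F|=|\E|$, build a competitor for $\E_\eps$ by choosing (via the coarea formula and the $L^1_{\rm loc}$ convergence) a radius $r$ for which $\mathcal{H}^{d-1}(\E_\eps\cap\partial B_r)$ and $|\E\setminus B_r|$ are small, replacing $\E_\eps\cap B_r$ by a recovery sequence for $\F$, keeping the far-away part $\E_\eps\setminus B_r$, and restoring the exact volume $v$ by a small dilation or a small ball; for $\eps$ small this has strictly smaller $F_\eps$-energy than $\E_\eps$, a contradiction. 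A minor additional point: to get $F_0(\E)\le\liminf_{\eps}F_\eps(\E_\eps)$ under merely local convergence one should argue through the localized functionals, $F_0(\E,B_R)\le\liminf_\eps F_\eps(\E_\eps,B_R)\le\liminf_\eps F_\eps(\E_\eps)$ for each $R$, and then let $R\to+\infty$ (this localized $\Gamma$-liminf is precisely what the paper uses in Theorem \ref{thasym}); your appeal to ``locality'' is the right idea but needs to be phrased this way rather than as lower semicontinuity of $F_0$ alone.
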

 
\noindent Given the function $\phi_g$ as above, we let 
\[
W_g = \left\{ x\in\R^d:\ \max_{\phi_g(y)\le 1}x\cdot y \le 1\right\}
\]
be the Wulff Shape corresponding to $\phi_g$. It is well-known 
that $W_g$ is the unique minimizer of $F_0$ with volume constraint, 
up to homothety and translation \cite{wulff, taylor}.

By Theorem \ref{thconv} we can characterize the asymptotic shape 
of the constrained minimizers as the volume tend to infinity.

\begin{thm}\label{thasym}
Let $d\le7$. For $v>0$ we let $E_v$ be volume-constrained minimizers of \eqref{con},
whose existence is guaranteed by Theorem \ref{thexist}. 
Then, there exist points $z_v\in \R^d$ such that letting 
\[
\widetilde E_v = \left(\frac{|W_g|}{v}\right)^\frac 1 d\,E_v + z_v
\]
it holds
\begin{equation}\label{eqwulff}
\lim_{v\to +\infty} \left| \widetilde E_v \Delta W_g\right| = 0.
\end{equation}
\end{thm}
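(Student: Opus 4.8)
The plan is to exploit the $\Gamma$-convergence result of Theorem~\ref{thconv} together with the rescaling invariance already noted, namely that $\eps E_v$ is a volume-constrained minimizer of $F_\eps$. The first step is to fix the volume once and for all: set $v_0=|W_g|$ and define, for each $v>0$, the rescaled sets $\widehat E_v=(|W_g|/v)^{1/d}E_v$, which have volume exactly $|W_g|=v_0$. I would then introduce the parameter $\eps=\eps(v)=(v_0/v)^{1/d}$, so that $\eps\to 0$ as $v\to+\infty$, and observe that $\widehat E_v=\eps(v)\,E_v$ is precisely a volume-$v_0$ minimizer of the rescaled functional $F_{\eps(v)}$. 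This is the crucial bookkeeping: the volume-$v$ minimization of $F$ is, after rescaling, exactly the fixed-volume-$v_0$ minimization of $F_{\eps}$ to which Corollary~\ref{corbel} applies.

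Next I would apply Corollary~\ref{corbel} (in its volume-constrained form) to the sequence $\widehat E_{v}$ along any sequence $v\to+\infty$. The corollary tells us two things: that $\limsup F_{\eps}(\widehat E_{v})\le \min_{|\E|=v_0}F_0(\E)$, and that any $L^1$-limit point (after suitable translation) is a volume-$v_0$ minimizer of $F_0$. By the known uniqueness of the Wulff shape as the minimizer of $F_0$ under a volume constraint, up to homothety and translation \cite{wulff, taylor}, the only volume-$v_0$ minimizer is $W_g$ itself up to translation. Thus the content of the theorem is really a compactness statement: that the $\widehat E_v$, once translated by appropriate $z_v$, converge in $L^1$ to a translate of $W_g$, which we may take to be $W_g$ itself by our choice of the normalizing constant $|W_g|$.

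The main obstacle is therefore establishing the compactness needed to extract $L^1$-convergent subsequences and to promote subsequential convergence to convergence of the whole family as $v\to+\infty$. Since the sets $\widehat E_v$ all have fixed volume $v_0$ and, by \eqref{stimF} together with the uniform upper bound $F_{\eps}(\widehat E_v)\le C$ coming from the $\limsup$ inequality, uniformly bounded perimeter, one has precompactness in $L^1_{\rm loc}$ after translating each $\widehat E_v$ so that its ``center of mass'' or a fixed fraction of its volume sits near the origin; the translations $z_v$ are chosen precisely for this purpose. The delicate point is to rule out loss of mass to infinity, i.e.\ to upgrade $L^1_{\rm loc}$ convergence to genuine $L^1$ convergence so that the limit has full volume $v_0$ rather than $|\E|\le v_0$. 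Here I would invoke Lemma~\ref{lemnum}: it controls the number of large connected components and bounds the total volume of the small ones, so after translation the bulk of the volume is concentrated in boundedly many components of controlled size, preventing volume from escaping. Combined with the density estimates of Proposition~\ref{densite} (which survive the rescaling in the appropriate form and prevent the minimizers from becoming arbitrarily thin), this gives that the limit set has volume exactly $v_0$, hence by Corollary~\ref{corbel} is a translate of $W_g$.

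Finally, to obtain convergence of the entire family rather than of subsequences, I would argue by contradiction: if \eqref{eqwulff} failed there would be a sequence $v_n\to+\infty$ and $\delta>0$ with $|\widehat E_{v_n}+z_{v_n}\,\Delta\,W_g|\ge\delta$ for every choice of translation, contradicting the subsequential convergence just established, since the limit is always $W_g$ up to translation and the normalization pins down the volume. This contradiction closes the argument. The one genuinely technical input is the no-mass-loss step; everything else is a clean application of $\Gamma$-convergence plus the uniqueness of the Wulff shape.
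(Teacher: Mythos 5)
Your setup (rescaling to fixed volume $|W_g|$, applying Corollary \ref{corbel}, and invoking uniqueness of the Wulff shape) matches the paper, and you correctly identify the crux: ruling out loss of mass. But the mechanism you propose for that step does not work, for two reasons. First, the density estimates of Proposition \ref{densite} do \emph{not} ``survive the rescaling in the appropriate form'': the radius $r_0\sim(\mu+\|g\|_\infty)^{-1}$ below which they hold shrinks to zero after the dilation by $(|W_g|/v)^{1/d}$, i.e.\ they only hold at scales of order $v^{-1/d}$ for $\widetilde E_v$. Indeed the paper explicitly remarks, right after the proof, that uniform density estimates for the rescaled minimizers are exactly what the authors were \emph{unable} to prove (they would upgrade $L^1$ to Hausdorff convergence). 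Second, and more fundamentally, Lemma \ref{lemnum} bounds the number and total volume of the connected components but says nothing about their mutual distances: the boundedly many large components can drift infinitely far apart as $v\to+\infty$, and in $d\ge 3$ even a \emph{single} component of bounded perimeter can have lobes joined by thin tubes that separate to infinity (which is why the paper abandons connected components altogether in its Case 2 and instead partitions $\R^d$ into cubes of fixed volume). In that scenario each piece converges, after \emph{different} translations, to some $\rho_i W_g$ with $\sum_i\rho_i^d=1$ but every $\rho_i<1$, and no single translation $z_v$ yields full $L^1$ convergence; your closing contradiction argument silently assumes this cannot happen, which is precisely the point at issue.

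What actually excludes splitting in the paper is an energy/scaling argument that is absent from your proposal: one localizes the functional on disjoint large balls around the separating pieces, applies the $\Gamma$-liminf inequality on each ball together with the limsup bound \eqref{eqineq}, and obtains $\sum_i \rho_i^{d-1}\le 1$, while the volume accounting (via the cube partition) gives $\sum_i\rho_i^{d}\ge 1$. Since $0\le\rho_i\le 1$ implies $\rho_i^{d}\le\rho_i^{d-1}$, both inequalities must be equalities, forcing a single $\rho_1=1$ and all other $\rho_i=0$. This exploitation of the strict concavity of $t\mapsto t^{(d-1)/d}$ (splitting the volume strictly raises the limiting anisotropic energy) is the heart of the proof; without it, compactness plus Corollary \ref{corbel} only yields that $L^1_{\rm loc}$ limits are minimizers with volume $\le |W_g|$, which is not enough to conclude.
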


\begin{proof}
Notice first that $\widetilde E_v$ is a minimizer of $\displaystyle F_{(\frac{|W_g|}{v})^\frac 1 d}$,
with volume constraint $|\widetilde E_v|=|W_g|$. Moreover, by \eqref{stimaf}
the perimeter of $\widetilde E_v$ is uniformly bounded in $v$.\\

\noindent {\em Case 1.} Let us consider the case $d=2$.
Assume first that $\Ev$ is connected. Then we have
\[
{\rm diam}(\Ev)\le P(\Ev)/\pi,
\]
hence the sets $\Ev$ are all contained, up to a translation, in a fixed ball centered in 
the origin. By the compactness theorem for sets of finite perimeter \cite{giusti},
there exist a bounded set $\E_\infty$ of finite perimeter and a sequence $v_k\to\infty$ 
such that $|\E_\infty|=|W_g|$ and
\begin{equation*}
\lim_{k\to +\infty} \left| \widetilde E_{v_k}\Delta \E_\infty\right| = 0.
\end{equation*}
Since by Theorem \ref{thconv} the set $\E_\infty$ is also a volume-constrained minimizer of $F_0$,
by uniqueness of the minimizer it follows that $\E_\infty$ is equal to $W_g$ up to a translation.

We now consider the general case when the sets $\E_v$ are not necessarily connected.
In particular we can write $\Ev=\cup_{i\ge 1} \Ev^i$, with $|\Ev^i|$ a decreasing sequence and
$\sum_{i\ge 1}|\E_v^i|=1$. Reasoning as before, there exists a sequence $v_k\to +\infty$ such 
that for all $i\in\mathbb N$ the sets $\E_{v_k}^i$ converge to $\rho_i W_g$, up to a translation,
where $\rho_i\in [0,1]$ is a decreasing sequence. 
Moreover, by Lemma \ref{lemnum}, for all $\delta>0$ there exists $N_\delta\in\mathbb N$ such that 
$\sum_{i=N_\delta}^\infty |\Ev^i|\le \delta |W_g|$ for all $\delta>0$,
which implies in the limit 
\begin{equation}\label{rouno}
\sum_{i=1}^\infty \rho_i^2 = 1.
\end{equation}
We claim that $\rho_1=1$ and $\rho_i=0$ for all $i>1$. Indeed, 
from \eqref{eqineq} we have
\[
F_0(W_g)\ge \limsup_{k\to +\infty}F_{\left(\frac{|W_g|}{v_k}\right)^\frac 1 2}(\E_{v_k})\ge \sum_{i=1}^{+\infty} F_0(\rho_i W_g)
=F_0(W_g)\sum_{i=1}^{+\infty} \rho_i\,.
\]
Recalling \eqref{rouno}, this implies
\[
\sum_{i=1}^{+\infty} \rho_i=\sum_{i=1}^{+\infty} \rho_i^2=1
\]
which proves the claim.

\noindent {\em Case 2.} We now turn to the general case. 
Let $v_k \to +\infty$ and let $\eps_k= \left(|W_g|/v_k\right)^\frac 1 d$. For all $k$, let  
$\{Q_{i,k}\}_{i\in\mathbb N}$ be a partition of  $\R^d$ into disjoint cubes of equal volume larger than $2|W_g|$, such that 
the sets $\E_{v_k}\cap Q_{i,k}$ are of decreasing measure, and 
let  $x_{i,k}=|\E_{v_k}\cap Q_{i,k}|/|W_g|$. By the isoperimetric inequality \cite{giusti}, 
there exist $0<c<C$ such that 
\begin{align*}
c\sum_i x_{i,k}^\frac{d-1}{d}&=c\sum_i \min\left(\frac{|\E_{v_k}\cap Q_{i,k}|}{|W_g|},\frac{|Q_{i,k} \backslash \E_{v_k}|}{|W_g|}\right)^\frac{d-1}{d}\\
			   & \le \sum_i P(\E_{v_k},Q_{i,k})\\			   
   			   &\le \sum_i \frac{1}{\Lambda} \int_{\partial \E_{v_k} \cap Q_{i,k}} 
   			   \left(1+ \sigma\left(\frac{x}{\eps_k}\right)\cdot \nu\right)\,d\mathcal H^{d-1}\\
			   &\le \frac{1}{\Lambda} F_{\eps_k}(\E_{v_k})\le C
\end{align*}
hence
\[
\sum_{i=1}^{+\infty} x_{i,k}=1 \qquad 
{\rm and}\qquad 
\sum_{i=1}^{+\infty} x_{i,k}^\frac{d-1}{d}\le \frac{C}{c}.
\]
Reasoning as in Lemma \ref{lemnum} we obtain that 
for all $\delta>0$ there exists $N_\delta\in\mathbb N$ such that
\begin{equation}\label{ndelta2}
\sum_{i=N_\delta}^\infty x_{i,k} \le \delta.
\end{equation}
Up to extracting a subsequence, we can suppose that  
$x_{i,k}\to \alpha_i^d\in [0,1]$ as $k\to +\infty$ for every $i\in\mathbb N$, so that by \eqref{ndelta2} we have
\begin{equation}\label{eqstella}
\sum_i \alpha_i^d=1.
\end{equation}
Let $z_{i,k} \in Q_{i,k}$. Up to extracting a further subsequence, we can suppose that $d(z_{i,k},z_{j,k}) \to c_{ij} \in [0,+\infty]$, and
\[
\left(\E_{v_k}-z_{i,k}\right) \to E_i \quad \textrm{ in the } L^1_{\rm loc}\textrm{-convergence}
\]
for every $i\in\mathbb N$ (see Figure \ref{masssplit}). 
By Corollary \ref{corbel} we  thus have 
\[
E_i= \rho_i W_g \qquad \rho_i\in [0,1].
\]

We say that $i \sim j$ if $c_{ij} < +\infty$ and we denote by $[i]$ the equivalence class of $i$.
Notice that $E_i$ equals $E_j$ up to a traslation, if $i\sim j$. We want to prove that
\begin{equation}\label{rodi}
\sum_{[i]} \rho_i^{d}\ge 1,
\end{equation}
where the sum is taken over all equivalence classes.
For all $R>0$ let $Q_R=[-R/2,R/2]^d$ be the cube of sidelength $R$. Then for every $i\in\mathbb N$,
\[
|E_i| \geq |E_i \cap Q_R|= \lim_{k\to +\infty} \left|\left(\E_{v_k} -z_{i,k}\right) \cap Q_R\right|.
\]
If $j$ is such that $j \sim i$ and $c_{ij} \le \frac{R}{2}$, possibly increasing $R$ we have 
$Q_{j,k}- z_{i,k} \subset Q_R$ for all $k\in\mathbb N$, so that
\[
\lim_{k\to +\infty} \left|\left(\E_{v_k} -z_{i,k}\right) \cap Q_R\right|\geq \lim_{k \to +\infty} 
\sum_{c_{ij} \leq \frac{R}{2}} |\E_{v_k} \cap Q_{j,k}|=\sum_{c_{ij} \leq \frac{R}{2}} \alpha_j^{d} |W_g|.
\]
Letting $R\to +\infty$ we then have
\[
|E_i| \geq \sum_{i\sim j} \alpha_j^d |W_g|
\]
hence, recalling \eqref{eqstella},
\[
\sum_{[i]} |E_i| \ge |W_g|,
\]
thus proving \eqref{rodi}.

\begin{figure}[ht]
\centering{\input{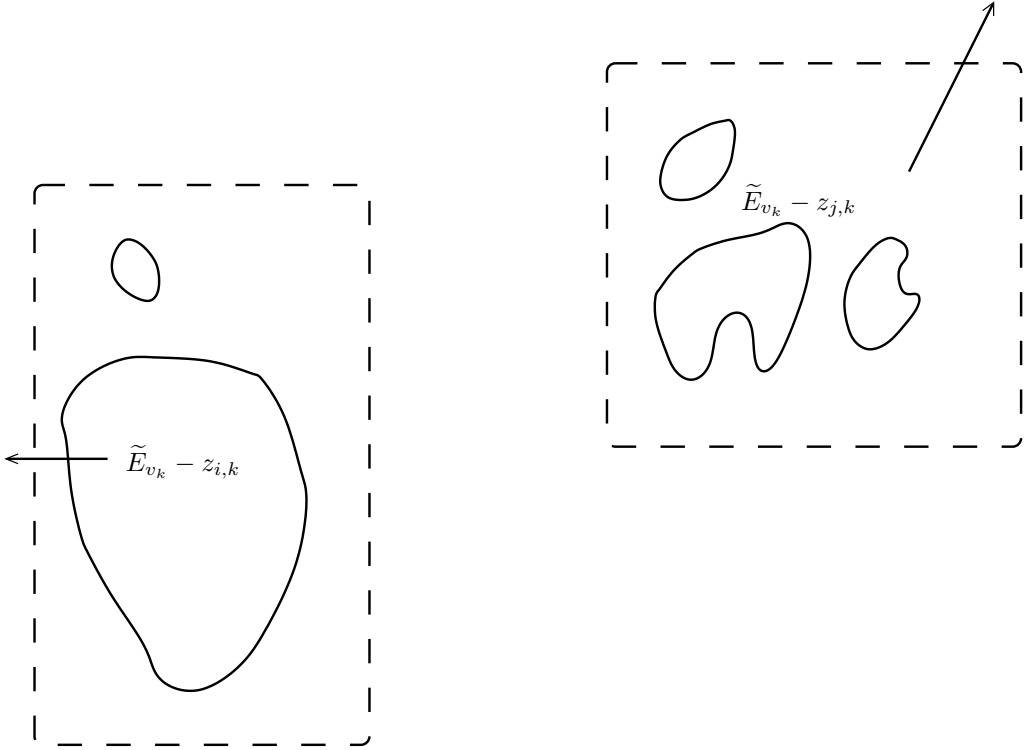}}
\caption{the construction in the proof of Theorem \ref{thasym}.}
\label{masssplit}
\end{figure}

Let us now show that 
\begin{equation}\label{rofinal}
 \sum_{[i]} \rho_i^{d-1} =1.
\end{equation}
Up to passing to a subsequence, from now on 
we shall assume that $c_{ij}=+\infty$ for all $i\ne j$.
Let $I \in \mathbb{N}$ be fixed. Then for every $R>0$ there exists $K \in \mathbb{N}$ such that for every $k\ge K$ and $i$, $j$ less than $I$, we have 
\[d(z_{i,k},z_{j,k}) > R. 
\]
For $k \ge K$ we thus have
\begin{align*}
 F_{\eps_k}(\E_{v_k})\geq & \sum_{i=1}^I \int_{\partial \E_{v_k} \cap (B_R+z_{i,k})}
  \left(1+ \sigma\left(\frac{x}{\eps_k}\right) \cdot \nu\right)\,d\mathcal H^{d-1}
 \\
		=   & \sum_{i=1}^I \int_{\partial (\E_{v_k}-z_{i,k}) \cap B_R} \left(1+ \sigma\left(\frac{x}{\eps_k}\right) \cdot \nu\right)\,d\mathcal H^{d-1}
		\\
		=   & \sum_{i=1}^I F_{\eps_k}(\E_{v_k} -z_{i,k} ,B_R)
\end{align*}
where 
$$
F_\eps(E,B_R)=\int_{\partial E \cap B_R} \left(1+ \sigma\left(\frac{x}{\eps_k}\right) \cdot \nu\right)\,d\mathcal H^{d-1} .
$$ 
{}From this, \eqref{eqineq} and the $\Gamma$-convergence of $F_\eps(\cdot, B_R)$ to $F_0(\cdot,B_R)$, we get
\[
F_0(W_g)\ge \limsup_{\eps_k\to 0}F_{\eps_k}(\E_{v_k})\ge \sum_{i=1}^I \liminf_{\eps_k \to 0} F_{\eps_k}(\E_{v_k} -z_{i,k} ,B_R) \ge \sum_{i=1}^I F_0(E_i, B_R).
\]
For $R>{\rm diam}(W_g)$ we have  $F_0(E_i, B_R)=F_0(E_i)$ because $E_i= \rho_i W_g$  and therefore
\[
F_0(W_g)\ge\sum_{i=1}^I F_0(E_i)=\sum_{i=1}^I \rho_i^{d-1} F_0(W_g).
\]
Letting $I \to +\infty$ we get \eqref{rofinal}. 

Recalling \eqref{rodi}, from \eqref{rofinal} we then obtain
\[
\sum_{i}\rho_i^{d-1}=\sum_{i}\rho_i^{d}=1.
\]
As before, this implies $\rho_1=1$ and $\rho_i=0$ for all $i>1$, thus giving
\[
\lim_{k\to +\infty}
\left|\left(\E_{v_k}-z_{1,k} \right) \Delta W_g\right|=0.
\]
By the uniqueness of the limit this shows that the whole sequence $\E_v$ tends to $W_g$ as $v\to +\infty$, 
up to suitable translations.
\end{proof}

\begin{remarque}\rm
Let us point out that, if uniform density estimates for $\E_v$ were available, we would get Hausdorff 
convergence instead of $L^1$ convergence in \eqref{eqwulff}, 
showing in particular that the sets $\E_v$ are connected for $v$ large enough (see \cite{morganros}). 
We believe that such estimates are true even if we were not able to prove them. 
\end{remarque}

\begin{remarque}\rm 
The asymptotic behavior of minimizers of \eqref{con}, in the small volume regime, have been considered in \cite{figmag},
where the authors prove a result similar to Theorem \ref{thasym}, with the Wulff Shape $W_g$  replaced by 
the Euclidean ball, showing in particular that the volume term becomes irrelevant for small volumes.
\end{remarque}

\begin{remarque}\rm
Notice that the results of this paper can be extended with minor modifications of the proofs 
to anisotropic perimeters of the form
\[
P_\phi(E)=\int_{\pE} \phi(\nu) d\mathcal{H}^{d-1}
\]
where $\phi:\R^d\to [0,+\infty)$ is a smooth and uniformly convex norm on $\R^d$, with $d\le 3$ \cite{ASS}.
\end{remarque}


\end{document}